\documentclass[12pt, twoside, leqno]{article}

% Modif. February 26, 2020
% In case of any problems, send comments to publ@impan.pl

% Using pdflatex is preferred

\usepackage{amsmath,amsthm}
\usepackage{amssymb}
\usepackage{xcolor}

%% Optional, but useful:
\usepackage{enumitem}

%% Add only when there are figures:
\usepackage{graphicx}

%% If you are using letters of the Polish alphabet, add 
\usepackage[T1]{fontenc}
%% E.g. the name "Zoladz" is then coded \.Zo{\l}\k{a}d\'z

%% In the running head, replace first names by initials 
%% and give an abbreviation of the title.

\pagestyle{myheadings}
\markboth{Jakub Tomaszewski}{Borel complexity of the set of typical numbers}

%%%%%

%% Numbered objects of "theorem" style (text italicized).
%% Below, the optional parameters indicate that all objects are numbered together, and "by section".
%% However, you are welcome to use any other numbering system of your choice, as well as your own abbreviations.

\newtheorem{theorem}{Theorem}[section]
\newtheorem{corollary}[theorem]{Corollary}
\newtheorem{lemma}[theorem]{Lemma}
\newtheorem{proposition}[theorem]{Proposition}

%% A numbered theorem with a fancy name:

%% Numbered objects of "non-theorem" style (text roman):

\theoremstyle{definition}
\newtheorem{definition}[theorem]{Definition}

\newtheorem{thmx}{Theorem}

%% An unnumbered object:

%% Equations numbered by section (optional):

\numberwithin{equation}{section}

%%%%%%%%%%% For IMPAN journals:

\frenchspacing

\textwidth=13.5cm
\textheight=23cm
\parindent=16pt
\oddsidemargin=-0.5cm
\evensidemargin=-0.5cm
\topmargin=-0.5cm

%%%%%%%%%%%%%%%%%%%%%%%%%%%%%%%%%%%
%%%%%%%%%%%%%%%%%%%%%%%%%%%%%%%%%%%

%%%% Put your macros here:

%%%% Here are two examples:

%%%%%%%%%%%%%

\begin{document}

%%%%% To ease editing, for IMPAN journals add:

\baselineskip=17pt

%%%%%%%%%%%%%%%%

\title{Borel complexity of the set of typical numbers}

\author{Jakub Tomaszewski\\
Address: Faculty of Applied Mathematics, AGH University\\
Mickiewicza 30, 30-059 Kraków\\
E-mail: tomaszew@agh.edu.pl}

\date{}

\maketitle

%% Classification and key words; note that the 2010 classification is used:

\renewcommand{\thefootnote}{}

\footnote{2020 \emph{Mathematics Subject Classification}: Primary 03E15, 28A05; Secondary 11A63, 11K16.}

\footnote{\emph{Key words and phrases}: typical numbers, normal numbers, Borel hierarchy.}

\renewcommand{\thefootnote}{\arabic{footnote}}
\setcounter{footnote}{0}

%%%%%%%%
\begin{abstract}
In the present note we study the interrelations between the sets of so-called typical numbers and numbers that are normal in base two. Employing results by Nakai and Shiokawa, we exhibit examples of numbers that belong to one set but do not belong to the other and vice versa. Moreover, we demonstrate that the set of typical numbers is $\Pi_3^0$ in the Borel hierarchy, i.e., it can be expressed as the union of countably many $F_{\sigma}\textnormal{-sets.}$ Using the result by Ki and Linton that asserts the same for normal numbers, we examine the Borel complexity of the set of typical numbers that are not normal, proving that it is a $D_2(\Pi_3^0)$ set.
\end{abstract}

\maketitle

\section{Introduction}
In 1970, Paul Erdős and Alfred Rényi have shown in \cite{Erdos} a "new law of large numbers". They were examining the fair game of independent repetitions, where at each round the outcomes are either $1$ or $-1$. A special case of the theorem they proved asserts that
\begin{equation*}
    P\left(\lim_{N\to\infty} \max_{0\leq n\leq N-[\log_2 N]}\frac{S_{n+[\log_2 N]}-S_n}{[\log_2 N]}=1\right)=1,
\end{equation*}
where $[x]$ is the integer part of $x$ and $S_k$ is a total gain up to $k\textnormal{-th}$ round. This means that almost surely the maximal amount of consecutive successful rounds after $N$ rounds played tends to $[\log_2 N]$ as the game progresses.
\par The course of that game can be identified with binary sequences, relating this result with real numbers on $[0,1]$. By the above law, we obtain that for almost every $x\in [0,1]$ the following limit exists: $$\mathcal{L}_n(x)\xrightarrow[n\to\infty]{}1,$$ where $\mathcal{L}_n(x)=\frac{L_n(x)}{\log_2 n}$ and $L_n(x)$ is a "run-length" function, returning the maximal length of a sequence of consecutive ones up to the $n\textnormal{-th}$ position in the binary expansion of $x$. Those $x\in [0,1]$, whose binary expansions do not satisfy the above convergence we call exceptional in the sense of Erdős-Rényi law. There has been a number of studies on exceptional sets, e.g. by Sun and Xu in  \cite{Sun}. In particular, they examined the Hausdorff dimension of specific sets 
\begin{equation*}
    E_{\alpha,\beta} = \{x\in[0,1]\colon \liminf\mathcal{L}_n(x)=\alpha,\; \limsup\mathcal{L}_n(x)=\beta\}
\end{equation*}
for two numbers $\alpha,\, \beta\in[0,\infty]$ satisfying $\alpha\leq\beta$.
\par In this paper we will examine the set of all exceptional numbers, as well as its complement - the set of typical numbers, for which $\mathcal{L}_n$ indeed converges to 1. It turns out that the set of typical numbers is Borel and thus we can investigate its Borel complexity. This classification gives us a way to express how complex is the construction of a set. In simple words, we start with open or closed sets and look where we exceed these topological families by the operations of countable union and countable intersection. Every Borel set can be expressed using countably many of the above mentioned operations. Thus we associate a notion of level to the amount of operations used during the process of construction. It is common for a set to be no higher than at level 3 in this hierarchy, meaning combining two exceeding operations. We will show that the set of typical numbers is no different, placing it exactly at level 3.
\begin{thmx}\label{theoremA}
    The set of typical numbers is Borel and $\Pi^0_3([0,1])\text{-complete}$.
\end{thmx}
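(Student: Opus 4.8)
The plan is to prove the two halves of completeness separately: membership $T\in\Pi^0_3$ (where $T=\{x\in[0,1]:\mathcal{L}_n(x)\to 1\}$ is the set of typical numbers), and $\Pi^0_3$-hardness. For the upper bound I would first unwind the definition of convergence:
\begin{equation*}
    T=\bigcap_{k\geq 1}\ \bigcup_{N\geq 1}\ \bigcap_{n\geq N}\ \Big\{x:\big|\mathcal{L}_n(x)-1\big|\leq \tfrac1k\Big\}.
\end{equation*}
Since $L_n(x)$ is determined by the first $n$ binary digits of $x$ and takes only integer values, each set $A_{k,n}=\{x:|\mathcal{L}_n(x)-1|\le 1/k\}$ is a finite union of dyadic intervals. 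The cleanest way to read off the complexity is to carry out this computation in the Cantor space $2^{\mathbb N}$, where each $A_{k,n}$ is genuinely clopen; then $\bigcap_{n\ge N}A_{k,n}$ is closed $(\Pi^0_1)$, the union over $N$ is $F_\sigma$ $(\Sigma^0_2)$, and the intersection over $k$ is $F_{\sigma\delta}=\Pi^0_3$. Transferring back to $[0,1]$ via the binary-expansion map costs only the countable set of dyadic rationals, and since $\Pi^0_3$ is closed under finite unions and intersections with $\Sigma^0_2$ (in particular countable) sets, the set of typical numbers is $\Pi^0_3([0,1])$.

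For hardness I would exhibit a continuous reduction to $T$ from a recognised $\Pi^0_3$-complete set, namely the eventually-zero-rows set
\begin{equation*}
    P=\big\{x\in 2^{\mathbb N\times\mathbb N}:\ \forall i\ \exists N\ \forall n\ge N\ \ x(i,n)=0\big\},
\end{equation*}
whose completeness is classical (it is the countable universal quantification of the $\Sigma^0_2$-complete set of eventually-zero sequences). The goal is a continuous $f:2^{\mathbb N\times\mathbb N}\to[0,1]$ with $x\in P\iff f(x)\in T$. The crucial structural feature I would exploit is that $L_n(x)$ is non-decreasing: once a run of $1$'s of length $\ell$ appears by position $p$, one has $\mathcal{L}_n\ge \ell/\log_2 n$ for all $n\ge p$, a bump peaking at height $\ell/\log_2 p$ at $n=p$ and then decaying back down. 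I would build the expansion of $f(x)$ in consecutive blocks carrying a \emph{baseline} skeleton: runs of $1$'s of length $\approx\log_2 n$ inserted at positions $q_m$ chosen with $\log_2 q_{m+1}/\log_2 q_m\to 1$, so that the baseline alone is typical, with $\liminf\mathcal{L}_n=\limsup\mathcal{L}_n=1$.

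The encoding sits on top of this skeleton. Fixing $\delta_i=1/(i+1)\to 0$ and a schedule of the pairs $(i,n)$ into increasing scales $p_t$, at the slot assigned to $(i,n)$ I read the bit $x(i,n)$: if it is $1$ I insert an oversized run of length $\lceil(1+\delta_i)\log_2 p_t\rceil$, producing a bump of peak height $\approx 1+\delta_i$; if it is $0$ I simply continue the baseline. Then if some row $i$ has infinitely many $1$'s, infinitely many bumps of peak height $1+\delta_i$ occur at positions tending to infinity, so $\limsup\mathcal{L}_n\ge 1+\delta_i>1$ and $f(x)\notin T$. Conversely, if every row is eventually zero, then along any sequence of bump-peaks whose positions tend to infinity the row indices must tend to infinity (otherwise a fixed row would carry infinitely many $1$'s), so the peak heights $1+\delta_i$ tend to $1$; since all bumps only raise $L_n$, the baseline still forces $\liminf\mathcal{L}_n=1$, and we obtain $\limsup\mathcal{L}_n=1$, hence $f(x)\in T$. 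Continuity of $f$ is built in, because each digit of $f(x)$ depends on only finitely many of the bits read so far.

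The main obstacle is the quantitative bookkeeping rather than the logical skeleton: I must choose the block lengths, the baseline positions $q_m$, the scale assignment $t\mapsto(i,n)$, and the separating $0$'s so that (i) the baseline genuinely realises $\mathcal{L}_n\to 1$, with the dips between consecutive runs controlled by $\log_2 q_{m+1}/\log_2 q_m\to 1$; (ii) inserting the longer oversized runs (still of size $O(\log_2 p_t)$, hence negligible in the position scale) never creates a gap long enough to drag $\liminf$ below $1$; and (iii) the decay estimate $\mathcal{L}_n=\ell/\log_2 n$ after each bump is sharp enough that the surviving peak heights, and nothing else, govern $\limsup$. Verifying these $\log_2$-scale estimates and confirming that the construction indeed defines a continuous map with $f^{-1}(T)=P$ is where the real work lies; granting them, $\Pi^0_3$-hardness follows, and together with the upper bound this yields that $T$ is $\Pi^0_3([0,1])$-complete.
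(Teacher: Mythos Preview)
Your upper-bound argument is essentially the paper's. For hardness, however, you take a genuinely different route. The paper reduces from $P_3=\{(a_n)\in\mathbb{N}^{\mathbb{N}}:\liminf a_n=\infty\}$ by an analytic device: at stage $n$ it lays down a block of $[T(n)]$ ones, where $T(n)=2^{1/M(n)}\,t(n)$ and $t(n)$ is the order-$M(n)$ Taylor polynomial of $\log_2$ evaluated at the current position $B_n$. When $M(n)\to\infty$ both the truncation error and the multiplicative overshoot $2^{1/M(n)}$ vanish, producing a typical number; along a bounded subsequence $M(n_k)<N$ the overshoot stays $\ge 2^{1/N}>1$, forcing $\limsup\mathcal{L}_n>1$. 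You instead reduce from the ``all rows eventually zero'' set in $2^{\mathbb{N}\times\mathbb{N}}$ via a baseline-plus-bumps construction: a fixed typical skeleton, overlaid with an oversized run of peak height $1+\delta_i$ whenever the bit $x(i,n)=1$. Your scheme is the more standard descriptive-set-theoretic template (in the spirit of the Ki--Linton reduction for normal numbers) and makes the two directions transparent: bumps can only raise $L_n$, so $\liminf$ is pinned at $1$ by the baseline, while $\limsup$ is read off from which peak heights $1+\delta_i$ recur infinitely often. The paper's encoding is more compact---a single formula rather than a two-layer construction---but the Taylor truncation is somewhat decorative, since the factor $2^{1/M(n)}$ carries the actual dichotomy. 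The bookkeeping you flag in (i)--(iii) is genuine but routine: placing blocks at positions $2^m$, padding each inserted run with a zero on both sides, and using that each oversized run has length $O(\log_2 p_t)=o(p_t)$ suffices to push all three estimates through.
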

\par However, the set of typical numbers which are not normal happens to be above level 3. Normal numbers in $[0,1]$ are those, whose binary expansions satisfy the following property: for a given finite sequence of length $n$, its frequency of appearance is exactly $\frac{1}{2^n}$, meaning that the occurrence of every block of a given length is equally probable. The set of normal numbers is also of full measure on $[0,1]$. Some might say that relating normal numbers with typical numbers is natural. Since the chance to observe a sequence of $n$ consecutive ones in the initial part of length $2^n$ of the infinite binary expansion tends to 1 as $n$ tends to infinity, every normal number could be expected to be typical. How natural this correlation may be, it is non-trivial and we will construct examples of a typical abnormal number and a normal exceptional number. We will prove the following theorem:
\begin{thmx}\label{theoremB}
    The sets $\textit{TpcN}\setminus\textit{Normal},\; \textit{Normal}\setminus\textit{TpcN}$ are nonempty and they are $D_2(\Pi^0_3)$-complete.
\end{thmx}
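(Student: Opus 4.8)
The plan is to split the proof into an easy membership part, the (already available) nonemptiness, and the genuinely hard completeness part. For membership, recall from Theorem~\ref{theoremA} that $\textit{TpcN}\in\Pi^0_3([0,1])$, and from the theorem of Ki and Linton that $\textit{Normal}\in\Pi^0_3([0,1])$. For any class $\Gamma$ one has $D_2(\Gamma)=\{P\cap S:P\in\Gamma,\ S\in\check\Gamma\}$, and since $A\setminus B=A\cap([0,1]\setminus B)$ with $[0,1]\setminus B\in\Sigma^0_3$ whenever $B\in\Pi^0_3$, both $\textit{TpcN}\setminus\textit{Normal}$ and $\textit{Normal}\setminus\textit{TpcN}$ lie in $D_2(\Pi^0_3)$. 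Nonemptiness is already witnessed by the explicit typical abnormal number and the explicit normal exceptional number obtained earlier from the work of Nakai and Shiokawa (and would in any case follow from completeness, a $D_2(\Pi^0_3)$-hard set being nonempty and proper). Hence the whole statement reduces to proving that each set is $D_2(\Pi^0_3)$-hard.

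For the hardness I would fix a $\Pi^0_3$-complete set $U\subseteq\mathcal{N}$ and reduce the canonical $D_2(\Pi^0_3)$-complete set
\[\widetilde{S}=\{(\alpha,\beta)\in\mathcal{N}\times\mathcal{N}:\alpha\in U\text{ and }\beta\notin U\}.\]
(It is complete because, writing an arbitrary $D_2(\Pi^0_3)$ set as $P\cap S$ with $P\in\Pi^0_3$ and $S\in\Sigma^0_3$, completeness of $U$ and of the $\Sigma^0_3$-complete set $U^{c}$ gives continuous $u,v$ with $P=u^{-1}(U)$, $S=v^{-1}(U^{c})$, so $z\mapsto(u(z),v(z))$ reduces $P\cap S$ to $\widetilde S$.) It therefore suffices to construct a continuous map $\Phi\colon\mathcal{N}\times\mathcal{N}\to[0,1]$ with
\[\Phi(\alpha,\beta)\in\textit{TpcN}\iff\alpha\in U,\qquad\Phi(\alpha,\beta)\in\textit{Normal}\iff\beta\in U,\]
since then $\Phi$ reduces $\widetilde S$ to $\textit{TpcN}\setminus\textit{Normal}$. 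The set $\textit{Normal}\setminus\textit{TpcN}$ is treated symmetrically, interchanging the two coordinates and reducing the companion complete set $\{(\alpha,\beta):\beta\in U,\ \alpha\notin U\}$.

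I would build the binary expansion of $\Phi(\alpha,\beta)$ directly on a rapidly increasing scale of stages, splitting each stage into a \emph{bulk} part of asymptotic density one and a \emph{sparse} part of density zero. The bulk is designed to do two things at once: its limiting block frequencies are steered, by a Ki--Linton--Nakai--Shiokawa type mechanism reading $\beta$, to the uniform distribution precisely when $\beta\in U$; and, regardless of $\beta$, its runs of $1$'s are capped so that the bulk alone contributes a longest run of length at most $(1+o(1))\log_2 n$. On the sparse positions (short sub-blocks placed at locations $m_1<m_2<\cdots$ with $m_{k+1}=m_k^{1+o(1)}$, so of density zero) I plant runs of $1$'s steered by $\alpha$: when $\alpha\in U$ each planted run has length $\lfloor\log_2 m_k\rfloor$, and when $\alpha\notin U$ one plants, at infinitely many of these scales, a single run of length $(1+\varepsilon)\log_2 m_k$. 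The verification then rests on two invariances: altering a sequence on a density-zero set changes no limiting block frequency, so normality of $\Phi(\alpha,\beta)$ equals that of the bulk and is governed by $\beta$ alone; and, because $L_n$ is the running maximum of run lengths and the longest runs present are those we plant, the asymptotics of $\mathcal{L}_n$ are governed by $\alpha$ alone. (The planted content guarantees the outputs are never eventually constant, so $\Phi$ lands among non-dyadic reals with canonical binary expansions, and each output digit depends on finitely many input digits, giving continuity.)

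The main obstacle is exactly the simultaneous design of the bulk, and the calibration that makes the two invariances coexist. One must check that capping the bulk's runs at $(1+o(1))\log_2 n$ is compatible with driving its fixed-length block frequencies to the uniform distribution---this is plausible because the cap grows and every block of fixed length is eventually permitted, but it must be verified that forbidding the rare over-long runs perturbs each frequency only by $o(1)$. The most delicate direction is pinning typicality independently of $\beta$. When $\alpha\in U$ the planted runs of length $\lfloor\log_2 m_k\rfloor$, spaced by $m_{k+1}=m_k^{1+o(1)}$, force $L_n\ge(1-o(1))\log_2 n$ for all large $n$ (needed because a non-normal bulk, arising when $\beta\notin U$, may have short runs), while the universal cap forces $L_n\le(1+o(1))\log_2 n$, so $\mathcal{L}_n\to1$. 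When $\alpha\notin U$ one cannot lower $\liminf\mathcal{L}_n$ below $1$ by suppression, since a normal bulk (when $\beta\in U$) already forces runs of length $\approx\log_2 n$; instead exceptionality is realized solely through $\limsup\mathcal{L}_n>1$, by the occasional length-$(1+\varepsilon)\log_2 m_k$ run, a density-zero perturbation compatible with every bulk. Carrying out these estimates and the stage bookkeeping, then repeating the argument with the coordinates exchanged, yields $D_2(\Pi^0_3)$-hardness of both sets and, together with the membership above, their $D_2(\Pi^0_3)$-completeness.
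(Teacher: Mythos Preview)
Your proposal is correct and follows essentially the same strategy as the paper: decouple the two properties by having one continuous mechanism (a Ki--Linton density-zero modification of a fixed normal base sequence) control normality via one coordinate, and an independent sparse insertion of calibrated runs of $1$'s control typicality via the other coordinate, using that density-zero edits preserve normality. The paper's execution differs only in being more concrete---it builds an explicit normal base $\omega'$ (via a Madritsch--Mance construction) with $\mathcal{L}_n(\omega')\to 0$ rather than merely capped at $1+o(1)$, and it specifies the run-length steering by a Taylor-based factor $2^{1/M(n)}t(n)$---which makes the two verifications cleaner but is exactly the kind of ``stage bookkeeping'' you flag as remaining to be done.
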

The above result has a big consequence on the interrelations between typicality and normality, in view of the comment made in \cite{Bill} on the difference hierarchy. Namely, if for two $\Gamma$-complete sets $A,B$ their difference set $A\setminus B$ is $D_2(\Gamma)$-complete, then the difference has maximum logical complexity. Thus the correlation between being typical and being normal is not only non-trivial, but in fact any additional condition we would have to consider in order to observe such correlation has to be at least as logically complex as these notions are.

\section{Preliminaries}
Let $X=\{0,1\}^\mathbb{N}$ be the space of infinite binary sequences with the product topology induced from the discrete topology on $\{0,1\}$. A compatible metric is defined for $x,y\in X$ as follows:
\begin{equation*}
    d(x,y):=\begin{cases}
             0, \;\text{if} \:x=y, \\
             \frac{1}{2^{k-1}},\; \text{where} \:k=\min\{j\in\mathbb{N}\colon x_j\neq y_j\}, \;\text{if otherwise.}
        \end{cases}
\end{equation*}
Now consider a function $L_n(x)\colon\{0,1\}^\mathbb{N}\to\mathbb{N}\cup \{0\}$ defined in the following way:
\begin{equation*}
    L_n(x):=\max\{j\colon x_{i+1}=...=x_{i+j}=1\; \text{for some}\; 0\leq i\leq n-j\}.
\end{equation*}
The function $L_n(x)$ returns the maximal length of a sequence of consecutive ones appearing up to the $n$-th position in the sequence $x$.

\subsection{Typical numbers}
It is clear that we can associate a number $y\in[0,1]$ with a sequence $x\in\{0,1\}^\mathbb{N}$ using the following formula:
\begin{equation}\label{summation}
    y=\sum_{n=1}^\infty\frac{x_n}{2^n}.
\end{equation}
From now on, we will identify a number $y\in [0,1]$ with its corresponding binary expansion $x\in\{0,1\}^\mathbb{N}$. Although the expansion need not be unique, the choice of it is not essential for our considerations. It is possible to have at most two distinct binary expansions, and there are countably many numbers having this property. When it occurs, we have even more: first one must be finite, so the last 1 is represented by a 0 followed an infinite sequence of ones in the second expansion. Finite case does not satisfy any of the conditions for normality or typicality, since both of them require limit convergence. In the infinite one, frequencies are clearly biased simply for 0 and 1, since the sequence of consecutive ones becomes almost as long as the whole sequence when $n\to\infty$. Thus a number which has more than one expansion, cannot be typical, nor it can be normal.
\par By Erdős–Rényi Theorem (\cite{Erdos}) we know that for almost all $x\in [0,1]$
\begin{equation}\label{condition_above}
    \lim_{n\to\infty}\mathcal{L}_n(x)=\lim_{n\to\infty}\frac{L_n(x)}{\log_2n}=1,
\end{equation}
with respect to the standard Lebesgue measure on $[0,1]$.
We call a number typical when its binary expansion satisfies the condition (\ref{condition_above}), and exceptional when it doesn't. Let \textit{TpcN} denote the set of typical numbers and \textit{ExcN} denote the set of exceptional numbers.
\subsection{Normal numbers}
Let us briefly remind the definition of the set of normal numbers in $[0,1]$. We call a number normal in base 2 if each finite block of digits of length $m$ appears with frequency $\frac{1}{2^m}$ in the binary expansion of that number. Here, for a given finite block $\omega$ we define its frequency as $\lim_{n\to\infty} \frac{N(x, \omega, n)}{n}$, where $N(x, \omega, n)$ represents the amount of appearances of block $\omega$ in first $n$ positions of a binary expansion of a number $x\in[0,1]$. We denote the set of normal numbers $Normal$ and define $\textit{Abnormal}=[0,1]\setminus \textit{Normal}$. The set of normal numbers is Borel. To add to that, it is $\Pi^0_3$-complete and of full measure with respect to the Lebesgue measure on $[0,1]$, as stated in \cite{Dominik}.
\par One of the well-known normal numbers is the Champernowne constant (by \cite{Champernowne}, \cite{Nakai}). Champernowne constant is a number defined by its expansion, which in base 2 is constructed by concatenating binary notations of consecutive natural numbers:
\begin{equation*}
    \mathcal{C}_{2}=0.\;1\;10\;11\;100\;101\;110\;111\;...
\end{equation*}
We have the following theorem which gives a way to transform Champernowne constant into many other normal numbers via real polynomials:
\begin{theorem}[\cite{Nakai}]\label{Nakai}
Let $w(x)=a_nx^n+...+a_1x+a_0$ be a real polynomial that satisfies $w(x)>0$ for all $x>0$ and there exists $i\in\{1,...,n\}$ such that $a_i\neq 0$. The number
\begin{equation*}
    0.\;[w(1)]_r\;[w(2)]_r\;[w(3)]_r\;[w(4)]_r\;...,
\end{equation*}
where $[w(x)]_r$ is the integer part of $w(x)$ in base $r$, is normal.
\end{theorem}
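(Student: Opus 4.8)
The statement is a concatenation-normality theorem of Davenport--Erd\H{o}s / Nakai--Shiokawa type, and the plan is to verify the block-frequency definition of normality directly. Write $r$ for the base and, for a nonnegative integer $M$, let $\langle M\rangle_r$ denote its base-$r$ digit string, so the number in question is the concatenation $\theta = 0.\langle\lfloor w(1)\rfloor\rangle_r\langle\lfloor w(2)\rfloor\rangle_r\cdots$. Fix a block $B=b_1\cdots b_m\in\{0,\dots,r-1\}^m$ and let $\beta=\sum_{i=1}^m b_i r^{m-i}$ be its value. First I would record the digit-length asymptotics: since $w(x)>0$ and $w$ is nonconstant of degree $n$, one has $w(k)\sim a_n k^n$, hence $\lfloor w(k)\rfloor$ has $\lambda(k)=n\log_r k+O(1)$ digits and the concatenation of the first $K$ terms has total length $D(K)=\sum_{k\le K}\lambda(k)=nK\log_r K+O(K)$. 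It then suffices to prove $N(\theta,B,D(K))/D(K)\to r^{-m}$ along the block-boundary cutoffs $N=D(K)$, because $\lambda(K)=O(\log K)=o(D(K))$ makes interpolation to arbitrary $N$ immediate.

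Next I would split the occurrences of $B$ into those lying entirely inside a single string $\langle\lfloor w(k)\rfloor\rangle_r$ and those straddling a junction between consecutive terms. There are fewer than $(m-1)K=O(K)=o(D(K))$ straddling occurrences, so they do not affect the limiting frequency and may be discarded. For the interior occurrences I would count by digit position: reading $m$ consecutive base-$r$ digits of $\lfloor w(k)\rfloor$ beginning at position $p$ (counted from the least significant end) produces the value $\lfloor\lfloor w(k)\rfloor/r^p\rfloor \bmod r^m$, so an interior copy of $B$ at position $p$ occurs exactly when $\lfloor\lfloor w(k)\rfloor/r^p\rfloor\equiv\beta\pmod{r^m}$ and $p+m\le\lambda(k)$. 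The interior count of $B$ in the first $K$ terms is therefore $\sum_{k\le K}\#\{p:\ 0\le p\le\lambda(k)-m,\ \lfloor\lfloor w(k)\rfloor/r^p\rfloor\equiv\beta\ (\mathrm{mod}\ r^m)\}$, which after interchanging summations becomes, for each fixed $p$, a count of the integers $k\le K$ whose digits in positions $p,\dots,p+m-1$ form $B$.

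The heart of the argument is to show that for each fixed position $p$ this count equals $r^{-m}K+o(K)$ uniformly over the relevant range of $p$. Removing the integer parts introduces a controlled error, and the condition that $\lfloor w(k)\rfloor$ has digits $B$ in positions $p,\dots,p+m-1$ is governed by the distribution of the fractional parts $\{w(k)/r^{p+m}\}$. I would therefore invoke Weyl's criterion: equidistribution mod $1$ of $(w(k)/r^{p+m})_{k\le K}$ follows once the exponential sums $\sum_{k\le K} e\bigl(h\,w(k)/r^{p+m}\bigr)$ are shown to be $o(K)$ for every integer $h\neq0$, where $e(t)=e^{2\pi i t}$. Since $w$ has degree $n\ge1$, this is precisely the regime of van der Corput's method and Weyl differencing for polynomial sequences, and the expected density $r^{-m}$ emerges for all but the topmost positions. \textbf{I expect this uniform exponential-sum estimate to be the main obstacle}: the coefficients of $w$ are real rather than integral, so one cannot replace equidistribution by a finite congruence count, and one must control the Weyl sums uniformly as the modulus $r^{p+m}$ ranges over a scale comparable to $w(K)$. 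The leading-digit positions, where $r^{p+m}$ is comparable to or exceeds $w(k)$, obey a Benford-type logarithmic law rather than the uniform one, but these positions contribute only $O(K)=o(D(K))$ digits in total and are absorbed into the error term.

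Finally I would assemble the pieces: summing the per-position contributions $r^{-m}K+o(K)$ over the $\asymp n\log_r K$ admissible positions and over $k\le K$, together with the digit-length asymptotics $D(K)=nK\log_r K+O(K)$, yields $N(\theta,B,D(K))=r^{-m}D(K)+o(D(K))$. Dividing by $D(K)$ and letting $K\to\infty$ gives frequency $r^{-m}$ for every block $B$ of every length $m$, which is exactly the definition of normality in base $r$, completing the proof.
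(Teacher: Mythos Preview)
The paper does not prove this theorem; it is quoted from Nakai and Shiokawa \cite{Nakai} and used as a black box (to certify that certain explicit concatenated numbers are normal). There is therefore no in-paper argument to compare your proposal against.

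That said, your outline is faithful to the Davenport--Erd\H{o}s / Nakai--Shiokawa strategy: reduce to block-boundary cutoffs via the digit-length asymptotics, discard the $O(K)$ straddling occurrences, and for each fixed digit-position $p$ count the $k\le K$ with the prescribed $m$-block at that position by studying the distribution of $\{w(k)/r^{p+m}\}$. You have also correctly flagged the real difficulty: the required exponential-sum bound for $\sum_{k\le K} e(h\,w(k)/r^{p+m})$ must hold \emph{uniformly} in $p$ over a range of size $\asymp \log_r K$, and since the coefficients of $w$ are arbitrary reals the sequence need not be equidistributed for a single fixed $p$ (e.g.\ when $a_n/r^{p+m}$ is rational). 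Nakai--Shiokawa handle this not by pure Weyl equidistribution but via the Erd\H{o}s--Tur\'an inequality together with van der Corput / Weyl--Vinogradov bounds that are uniform in the modulus; they in fact obtain a quantitative discrepancy estimate, from which normality follows. Your sketch would need exactly that ingredient to close, but as a plan it matches the published proof.
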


\subsection{Classifying set constructions}\label{classification}
Since the family of open sets and closed sets in $X$ is not closed under both operations of countable unions and countable intersections, we need to consider a larger class of sets, Borel $\sigma$-algebra on $X$, denoted by $\mathcal{B}(X)$. On that class we are able to determine the complexity of construction of a given set.  We achieve it with Borel hierarchy (further reading can be found in \cite{Dominik}, \cite{Bill}, \cite{Kechris}, \cite{Miller}).

\subsubsection{Borel hierarchy}\label{borel_hierarchy_introduced}
Let $\Sigma_1^0(X)$ denote the collection of open sets and $\Pi^0_1(X)=\{X\setminus A\colon A\in \Sigma_1^0(X)\}$ be the collection of closed sets. For a countable ordinal $m$ let $\Sigma^0_m(X)$ be collection of countable unions $A=\bigcup_{n\in\mathbb{N}}X\setminus B_n$ such that $B_n\in\Pi_{\beta_n}^0(X)$ for $\beta_n<m$ and similarly $\Pi^0_m(X)=\{X\setminus A\colon A\in \Sigma_m^0(X)\}$. We also define $\Delta^0_n(X)=\Sigma^0_n(X)\cap\Pi^0_n(X)$. These pointclasses define the Borel hierarchy on Borel $\sigma$-algebra of sets for a topological space $X$. We use the abbreviated notation $\Pi_n^0, \Sigma_n^0, \Delta_n^0$, when it is clear on which space the hierarchy is defined.

We say that set $A$ is $\Sigma^0_n$-hard if $A\not\in\Pi^0_n$. The notion of hardness introduces a lower bound for the set construction, showing that $A$ is no simpler than a $\Sigma^0_n$ set. We also say that set $A$ is $\Sigma^0_n$-complete if $A\in\Sigma^0_n\setminus\Pi^0_n$. Note that this is the same to say that $A$ is $\Sigma^0_n$-complete if $A$ is $\Sigma^0_n$-hard and $A\in\Sigma^0_n$. Likewise, the set $A$ is $\Pi^0_n$-hard if $A\not\in\Sigma^0_n$ and $\Pi^0_n$-complete if $A\in\Pi^0_n\setminus\Sigma^0_n$.

\begin{theorem}[\cite{Miller}]\label{union_intersection}
For every countable ordinal $m$, the family $\Sigma^0_m$ is closed under countable unions and finite intersections, $\Pi^0_m$ is closed under countable intersections and finite unions, $\Delta^0_m$ is closed under finite unions and finite intersections.
\end{theorem}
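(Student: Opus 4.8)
The plan is to prove every assertion by transfinite induction on the countable ordinal $m$, establishing the closure properties of $\Sigma^0_m$ first, then obtaining the dual statements for $\Pi^0_m$ by De Morgan's laws, and finally deducing the $\Delta^0_m$ part. For the base case $m=1$ the claims are purely topological: the open sets $\Sigma^0_1$ are closed under arbitrary unions (hence countable unions) and finite intersections, and dually the closed sets $\Pi^0_1$ are closed under arbitrary intersections and finite unions.

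Closure of $\Sigma^0_m$ under countable unions I expect to be immediate from the definition: if $A_k\in\Sigma^0_m$ for $k\in\mathbb{N}$, I would write each $A_k=\bigcup_n(X\setminus B_{k,n})$ with $B_{k,n}\in\Pi^0_{\beta_{k,n}}$ and $\beta_{k,n}<m$; then $\bigcup_k A_k=\bigcup_{k,n}(X\setminus B_{k,n})$ is again a countable union, after identifying $\mathbb{N}\times\mathbb{N}$ with $\mathbb{N}$, of complements of sets from levels below $m$, hence lies in $\Sigma^0_m$.

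The heart of the argument is closure of $\Sigma^0_m$ under finite intersections, and it is here that the inductive hypothesis enters. Given $A=\bigcup_n(X\setminus B_n)$ and $A'=\bigcup_k(X\setminus B'_k)$ in $\Sigma^0_m$, I would distribute the intersection over the unions,
\begin{equation*}
    A\cap A'=\bigcup_{n,k}\big((X\setminus B_n)\cap(X\setminus B'_k)\big),
\end{equation*}
reducing the problem to showing that each piece $(X\setminus B_n)\cap(X\setminus B'_k)$ belongs to some level strictly below $m$. Since $X\setminus B_n$ and $X\setminus B'_k$ lie in $\Sigma^0_{\beta_n}$ and $\Sigma^0_{\gamma_k}$ respectively with $\beta_n,\gamma_k<m$, the inductive hypothesis applied at the ordinal $\max(\beta_n,\gamma_k)<m$, together with the monotonicity of the hierarchy in the lower levels, guarantees this piece lies in a class below $m$; the countable union over $n,k$ then lands in $\Sigma^0_m$ by the previous paragraph. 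I expect the step requiring care to be precisely the bookkeeping of the ordinals: one must observe that the maximum of two ordinals below $m$ is still below $m$, so that the distributed pieces never escape the levels available to $\Sigma^0_m$. This is the main obstacle, and it is what forces the transfinite (rather than ordinary) induction.

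Finally, the $\Pi^0_m$ statements follow by dualizing. If $C_k\in\Pi^0_m$ then $X\setminus C_k\in\Sigma^0_m$, so $\bigcup_k(X\setminus C_k)\in\Sigma^0_m$ by countable-union closure, whence $\bigcap_k C_k=X\setminus\bigcup_k(X\setminus C_k)\in\Pi^0_m$; the same De Morgan manipulation turns finite-intersection closure of $\Sigma^0_m$ into finite-union closure of $\Pi^0_m$. For $\Delta^0_m=\Sigma^0_m\cap\Pi^0_m$ the remaining closure properties are then automatic: a finite union of $\Delta^0_m$ sets is both a finite (hence countable) union of $\Sigma^0_m$ sets, so it lies in $\Sigma^0_m$, and a finite union of $\Pi^0_m$ sets, so it lies in $\Pi^0_m$, and the finite-intersection case is entirely symmetric.
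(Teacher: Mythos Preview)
The paper does not supply a proof of this theorem; it is quoted from \cite{Miller} as a background fact and left unproved. There is therefore nothing in the paper to compare your argument against. Your proposal is the standard transfinite-induction proof found in descriptive set theory textbooks (e.g.\ \cite{Kechris}): closure of $\Sigma^0_m$ under countable unions directly from the definition, closure under finite intersections by distributing and invoking the inductive hypothesis at $\max(\beta_n,\gamma_k)<m$, and the $\Pi^0_m$ and $\Delta^0_m$ statements by De~Morgan duality. This is correct. The one point you flag as needing care---that the monotonicity $\Sigma^0_\alpha\subseteq\Sigma^0_\beta$ for $\alpha\le\beta$ is required to place both factors at the same level before applying the inductive hypothesis---is indeed the only subtlety, and it is handled by proving monotonicity simultaneously in the same induction (or by appealing to Theorem~\ref{inclusion}, which the paper also cites without proof).
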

\begin{theorem}[\cite{Miller}]\label{inclusion}
We have the following class inclusions:
\begin{enumerate}
    \item $\Pi_{n}^0\subset\Sigma_{n+1}^0$,
    \item $\Sigma_n^0\subset\Pi_{n+1}^0$,
    \item if $\Pi_1^0\subset\Pi_2^0$, then
    \begin{itemize}
        \item $\Pi_n^0\subset\Pi_{n+1}^0$,
        \item $\Sigma_n^0\subset\Sigma_{n+1}^0$, and consequently
        \item $\Pi_n^0\cup\Sigma_n^0\subset\Delta_{n+1}^0$.
    \end{itemize}
\end{enumerate}
\end{theorem}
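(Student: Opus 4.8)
The plan is to handle the three parts in order, obtaining the later ones from the earlier by complementation and by the inductive shape of the definition of $\Sigma^0_m$. Throughout I would use that $A\in\Sigma^0_m$ exactly when $A=\bigcup_{k}A_k$ for countably many sets $A_k\in\Pi^0_{\beta_k}$ with $\beta_k<m$, and that $\Pi^0_m$ consists of the complements of $\Sigma^0_m$ sets. The only topological input enters through the hypothesis $\Pi^0_1\subset\Pi^0_2$ in part 3, which in our setting is available because $X$ is metrizable: every closed subset of a metric space is a $G_\delta$, hence $\Pi^0_2$.

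For part 1 I would observe that a set $A\in\Pi^0_n$ is its own countable union, $A=\bigcup_{k\in\mathbb{N}}A$, a union of $\Pi^0_n$ sets; since $n<n+1$ this presentation witnesses $A\in\Sigma^0_{n+1}$ straight from the definition. Part 2 then follows purely formally: if $A\in\Sigma^0_n$ then $X\setminus A\in\Pi^0_n\subset\Sigma^0_{n+1}$ by part 1, so $A=X\setminus(X\setminus A)\in\Pi^0_{n+1}$.

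For part 3 the key remark is that the inclusion $\Sigma^0_m\subset\Sigma^0_{m+1}$ is automatic whenever $m\ge 2$: a set $A=\bigcup_k A_k$ with $A_k\in\Pi^0_{\beta_k}$ and $\beta_k<m$ is, a fortiori, a countable union of sets from classes $\Pi^0_{\beta_k}$ with $\beta_k<m+1$, hence lies in $\Sigma^0_{m+1}$. Thus the only genuine content sits at the bottom level $m=1$, and there $\Sigma^0_1\subset\Sigma^0_2$ is exactly the complement of the hypothesis $\Pi^0_1\subset\Pi^0_2$: if $A\in\Sigma^0_1$ then $X\setminus A\in\Pi^0_1\subset\Pi^0_2$, so $A\in\Sigma^0_2$. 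Combining the two cases yields $\Sigma^0_n\subset\Sigma^0_{n+1}$ for every $n\ge 1$, and taking complements gives $\Pi^0_n\subset\Pi^0_{n+1}$. The last inclusion $\Pi^0_n\cup\Sigma^0_n\subset\Delta^0_{n+1}$ I would read off from parts 1 and 2: part 1 together with the just-proved $\Pi^0_n\subset\Pi^0_{n+1}$ gives $\Pi^0_n\subset\Sigma^0_{n+1}\cap\Pi^0_{n+1}=\Delta^0_{n+1}$, and symmetrically part 2 together with $\Sigma^0_n\subset\Sigma^0_{n+1}$ gives $\Sigma^0_n\subset\Delta^0_{n+1}$; their union is then contained in $\Delta^0_{n+1}$.

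The main obstacle is conceptual rather than computational: one must recognize that essentially everything is formal manipulation of the definition, with the sole non-trivial input being the base-level hypothesis $\Pi^0_1\subset\Pi^0_2$, which is where metrizability of the space enters and which cannot be dropped (in a general topological space an open set need not be $F_\sigma$, so the hierarchy need not increase at the bottom). Care is needed to keep track of which inclusions are unconditional (parts 1 and 2, and $\Sigma^0_m\subset\Sigma^0_{m+1}$ for $m\ge 2$) and which rely on the hypothesis (the level-$1$ step), and to invoke complementation consistently so that each $\Pi$-statement is obtained from the corresponding $\Sigma$-statement.
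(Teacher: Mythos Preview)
Your argument is correct and is the standard unwinding of the recursive definition of the Borel hierarchy; in particular your identification of the only nontrivial step---the base case $\Pi^0_1\subset\Pi^0_2$---and the observation that everything else is formal complementation or a fortiori reasoning on the indices, is exactly right. The paper itself does not supply a proof of this theorem: it is quoted from \cite{Miller} as background and left unproved, so there is no in-paper argument to compare against.
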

There are not many natural examples of sets that have level 4 or higher the hierarchy defined above. Vast majority of sets appear in some previous class, as mentioned in \cite{Dominik}. Observe that, as a consequence of Theorem \ref{inclusion}, every set belongs to infinite amount of classes, but appears in only one. We refer to "appearance" or "being" to express class completeness.
\subsubsection{Difference Hierarchy}
Let $(A_\nu)_{\nu<\theta}$ be an increasing sequence of subsets of $X$ for a countable non-zero ordinal $\theta$. Observe that each ordinal $\theta$ can be written as $\lambda+n$ where $\lambda$ is the limit or 0 and and $n<\omega$. Here we consider the parity of $\theta$ as the parity of $n$. Define the set $D_{\theta}((A_\nu)_{\nu<\theta})\subset X$ as a set of all those $x\in X$ such that they belong to $\bigcup_{\nu<\theta} A_{\nu}$ and the smallest $\nu$ such that $x\in A_{\nu}$ has the opposite parity to the one of $\theta$. Thus we obtain a sequence
\begin{equation*}
    \begin{split}
        &D_1(A_0)=A_0, \;D_2(A_1, A_0)=A_1\setminus A_0,\\
        &D_3(A_2, A_1, A_0)=(A_2\setminus A_1)\cup A_0,...
    \end{split}
\end{equation*}
Further details can be found in (\cite{Kechris}, 22.E).
\begin{theorem}[\cite{Kechris}, 22.27]\label{pomocy}
For a polish space \textit{X} let $$D_{\theta}(\Sigma_n^0)(X)=\{D_{\theta}((A_{\nu})_{\nu<\theta})\colon A_{\nu}\in\Sigma_n^0\}.$$ Then $\Delta_{n+1}^0=\bigcup_{1\leq\theta<\omega_1}D_{\theta}(\Sigma_n^0)(X)$.
\end{theorem}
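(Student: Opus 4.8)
The plan is to establish the two inclusions
$$\bigcup_{1\le\theta<\omega_1}D_\theta(\Sigma_n^0)(X)\subseteq \Delta_{n+1}^0 \quad\text{and}\quad \Delta_{n+1}^0\subseteq\bigcup_{1\le\theta<\omega_1}D_\theta(\Sigma_n^0)(X),$$
of which the first is a routine consequence of the closure properties recorded in Theorems \ref{union_intersection} and \ref{inclusion}, while the second is the substance of the statement and is where I expect the real work to lie.

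For the first inclusion I would fix $\theta<\omega_1$ and an increasing sequence $(A_\nu)_{\nu<\theta}$ of $\Sigma_n^0$ sets, then pass to the disjoint layers $R_\nu=A_\nu\setminus\bigcup_{\mu<\nu}A_\mu$ recording the index at which a point first enters the sequence. Because $\theta$ is countable, $\bigcup_{\mu<\nu}A_\mu$ is again $\Sigma_n^0$ by Theorem \ref{union_intersection}, so each $R_\nu$ is the intersection of a $\Sigma_n^0$ set with a $\Pi_n^0$ set and hence lies in $\Delta_{n+1}^0$ by Theorem \ref{inclusion}. Writing $D_\theta((A_\nu)_{\nu<\theta})=\bigsqcup_\nu R_\nu$, where $\nu$ ranges over the ordinals below $\theta$ of parity opposite to $\theta$, exhibits the difference set as a countable union of $\Delta_{n+1}^0$ sets, hence as a $\Sigma_{n+1}^0$ set. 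Its complement equals $(X\setminus\bigcup_{\nu<\theta}A_\nu)\sqcup\bigsqcup_\nu R_\nu$ with $\nu$ now of the same parity as $\theta$; the first term is $\Pi_n^0$ and the remaining union is $\Sigma_{n+1}^0$, so the complement is $\Sigma_{n+1}^0$ and therefore $D_\theta((A_\nu)_{\nu<\theta})\in\Pi_{n+1}^0$. Combining the two memberships gives $D_\theta((A_\nu)_{\nu<\theta})\in\Delta_{n+1}^0$.

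The harder inclusion I would approach by a transfinite derivative, i.e. the Hausdorff--Kuratowski resolution. Fixing $A\in\Delta_{n+1}^0$, both $A$ and $X\setminus A$ are $\Sigma_{n+1}^0$, so each is a countable union of $\Pi_n^0$ sets. I would build a decreasing transfinite sequence of $\Pi_n^0$ kernels $K_0=X\supseteq K_1\supseteq\cdots$ by deleting, at each step, those points of $K_\nu$ that possess a relative $\Sigma_n^0$-neighbourhood on which $A$ (or its complement) is locally simple, taking intersections at limit stages. Since $X$ is second countable the kernels stabilise at some countable ordinal $\theta$; reading off, for each $\nu$, the layer of $K_\nu\setminus K_{\nu+1}$ on which $A$ is relatively $\Sigma_n^0$ should then assemble an increasing sequence $(A_\nu)_{\nu<\theta}$ of $\Sigma_n^0$ sets with $A=D_\theta((A_\nu)_{\nu<\theta})$.

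The main obstacle will be to prove that the derivative terminates at the empty set, i.e. that $K_\theta=\emptyset$; this is precisely the step that uses the full strength of $A\in\Delta_{n+1}^0$ rather than merely $A\in\Sigma_{n+1}^0$. If the stable kernel $K_\theta$ were nonempty, then by stability every one of its points would be a two-sided boundary point for $A$ with respect to the $\Sigma_n^0$ structure, so that, relative to $K_\theta$, neither $A$ nor its complement would contain a nonempty relatively $\Sigma_n^0$ set. A Baire-category argument carried out in the topology generated by the $\Sigma_n^0$ sets then forces one of $A\cap K_\theta$, $(X\setminus A)\cap K_\theta$ to be somewhere comeager and hence, being a relative countable union of $\Pi_n^0$ sets, to have nonempty interior in that topology, which would have produced a deletable simple layer and contradicts stability. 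Making this category argument rigorous, and checking that the layers genuinely reassemble into a difference representation, is the technical heart; the base case $n=1$ is the classical theorem on differences of open sets (\cite{Kechris}, \cite{Miller}), and the general case follows the same pattern with $\Pi_n^0$ playing the role of the closed sets.
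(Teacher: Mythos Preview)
The paper does not prove this theorem at all; it is quoted from \cite{Kechris} as a black box and used only to motivate the difference hierarchy. There is therefore no in-paper proof to compare your proposal against.

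That said, your outline is essentially the standard Hausdorff--Kuratowski argument as presented in \cite{Kechris}, 22.27: the easy inclusion via closure properties, and the hard inclusion via a transfinite derivative that terminates by a Baire-category contradiction. One point where you should be more careful: the phrase ``the topology generated by the $\Sigma_n^0$ sets'' is too coarse for $n\geq 2$, since that topology need not be Polish. The usual reduction refines the topology of $X$ so that only the countably many $\Pi_n^0$ sets occurring in fixed $\Sigma_{n+1}^0$ representations of $A$ and $X\setminus A$ become clopen, keeping the space Polish; one then runs the $n=1$ argument in the new topology. With that adjustment your plan matches the reference.
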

By the above theorem and the fact that $\Sigma_n^0\subset D_2(\Sigma_n^0)$ one can see that $D_n$ classes live between levels in the Borel hierarchy. Similarly as for the $\Sigma_n^0$ sets, we can consider the above construction for $\Pi_n^0$ sets. Especially, we will focus on the class $D_2(\Pi_n^0)$, which consists of sets $C=A\setminus B$, where $A,B \in\Pi_n^0$. For $D_n$ classes we can examine the same notions of hardness and completeness as for $\Pi_n, \Sigma_n$ classes. It is reasonable to do so in view of Theorem \ref{pomocy}, since no set is $\Delta_n$-complete for $n\geq 2$ (see \cite[22.28]{Kechris}).

\section{Main results}
\subsection{Proof of Theorem \ref{theoremA}}
Before proceeding with the proof, let us discuss the techniques we will use and the steps we will follow. In this section, we will focus on the set of typical numbers, which we denote by $\textit{TpcN}$. We shall identify its class in the Borel hierarchy in two steps. Firstly we will rewrite its definition as a composition of unions and intersections of basic sets. Then we will construct a continuous map $f\colon\mathbb{N}^\mathbb{N}\to \{0,1\}^\mathbb{N}$ which will satisfy the condition $f^{-1}(\textit{TpcN})=P_3$, where $P_3=\{(a_n)_{n=1}^\infty\subset\mathbb{N}\colon\;\liminf_{n\to\infty} a_n=\infty\}$ is the set of natural sequences diverging to infinity, a well-known example of a $\Pi_3^0$-complete set (as stated in \cite{Dominik}). \par This approach follows a more general method of determining the Borel complexity. It consists of finding some construction of the set of interest and then defining a continuous map onto some other set, whose complexity is known. The former technique is simply determining an upper bound in the hierarchy, showing that the optimal set construction is no more complex than the one we wrote down. The latter one is known as the Wadge reduction:
\begin{theorem}[\cite{Dominik}]\label{Wadge}
Let $X,Y$ be Polish spaces and $f\colon X\to Y$ be a continuous function. Let $A\in\mathcal{B}(X), B\in\mathcal{B}(Y)$. If $A=f^{-1}(B)$, then they belong to the same Borel hierarchy classes in their $\sigma$-algebras, respectively.
\end{theorem}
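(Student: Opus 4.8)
The plan is to reduce the statement to the single assertion that the preimage operation $B\mapsto f^{-1}(B)$ carries each level of the Borel hierarchy on $Y$ into the \emph{corresponding} level on $X$; once this is in hand, the conclusion that $A=f^{-1}(B)$ occupies exactly the classes that $B$ does follows at once. The entire argument rests on the elementary fact that $f^{-1}$ is a Boolean homomorphism commuting with all the set operations that generate the hierarchy: for any family $(B_k)$ of subsets of $Y$,
\begin{equation*}
    f^{-1}\Big(\bigcup_k B_k\Big)=\bigcup_k f^{-1}(B_k),\qquad f^{-1}\Big(\bigcap_k B_k\Big)=\bigcap_k f^{-1}(B_k),\qquad f^{-1}(Y\setminus B)=X\setminus f^{-1}(B).
\end{equation*}
These identities are precisely what let countable unions, countable intersections and complementation pass through $f^{-1}$ unchanged, so that taking preimages never manufactures additional complexity.

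I would prove the preservation claim by transfinite induction on the level $n$. For the base case $n=1$, the implication $B\in\Sigma^0_1(Y)\Rightarrow f^{-1}(B)\in\Sigma^0_1(X)$ is nothing but the definition of continuity of $f$, and the complementation identity then yields the same for $\Pi^0_1$. For the inductive step, suppose the result holds for every $\beta<n$. If $B\in\Sigma^0_n(Y)$, write it as a countable union of sets drawn from the lower levels, say $B=\bigcup_k B_k$ with $B_k\in\Pi^0_{\beta_k}(Y)$ and $\beta_k<n$; then $f^{-1}(B)=\bigcup_k f^{-1}(B_k)$ by the first identity, and by the inductive hypothesis each $f^{-1}(B_k)$ lies in $\Pi^0_{\beta_k}(X)$, so their union lies in $\Sigma^0_n(X)$ by the definition of the hierarchy recalled in Section \ref{borel_hierarchy_introduced}. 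The case $B\in\Pi^0_n(Y)$ follows by complementation, and since $\Delta^0_n=\Sigma^0_n\cap\Pi^0_n$ the $\Delta^0_n$ case is obtained by intersecting the two conclusions.

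Finally, from this preservation I would read off the theorem. Because $A=f^{-1}(B)$, any class occupied by $B$ is also occupied by $A$, so $A$ is no more complex than $B$. The form of this fact that is actually exploited in the sequel is its contrapositive: if $A\notin\Sigma^0_n(X)$ then $B\notin\Sigma^0_n(Y)$, since otherwise preservation would force $A=f^{-1}(B)\in\Sigma^0_n(X)$. In other words, the continuous reduction transports hardness from $A$ to $B$, and together with an independently established upper bound on $B$ this pins the completeness degree of $B$ to that of $A$ — which is exactly the sense in which the two sets ``belong to the same Borel hierarchy classes''. I expect no genuine obstacle here: the proof is a clean transfinite recursion, and the only points demanding care are verifying that the base case coincides verbatim with continuity and organising the induction so that both successor and limit levels are covered. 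For the finite levels relevant to this paper the recursion is simply over $\mathbb{N}$, and the homomorphism identities above carry out all the remaining work.
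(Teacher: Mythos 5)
Your proof is correct, but note that the paper offers no argument of its own to compare against: Theorem \ref{Wadge} is imported from \cite{Dominik} as a black box, and your transfinite induction --- preimages of open sets are open by continuity, and $f^{-1}$ commutes with countable unions, countable intersections and complementation, so each level of the hierarchy passes through $f^{-1}$ unchanged --- is the standard proof one would find in \cite{Kechris}. What you handled particularly well is the gap between the theorem's literal wording and what is provable: read as a biconditional (``$A$ and $B$ belong to the same classes''), the statement is false, since a constant $f$ gives $A=f^{-1}(B)\in\Delta^0_1(X)$ no matter how complex $B$ is. Only the direction you prove, $B\in\Gamma(Y)\Rightarrow f^{-1}(B)\in\Gamma(X)$, holds, and its contrapositive (hardness transported from $A$ to $B$) is exactly the form invoked in the paper, e.g.\ where Lemma \ref{tpcn} together with $P_3\notin\Sigma^0_3$ yields that \textit{TpcN} is $\Pi^0_3$-hard, with the upper bound of Section \ref{upper_bound_tpcn} then pinning down completeness. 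One small discrepancy worth flagging: your induction uses the standard decomposition $B=\bigcup_k B_k$ with $B_k\in\Pi^0_{\beta_k}(Y)$, $\beta_k<n$, whereas Section \ref{borel_hierarchy_introduced} of the paper writes $\Sigma^0_m$ as unions of \emph{complements} of $\Pi^0_{\beta_n}$ sets, which as written would make $\Sigma^0_2$ consist of countable unions of open sets; that is evidently a slip in the paper, your (Kechris) convention is the intended one, and with it your induction covers successor and limit levels uniformly, since every $\Sigma^0_n$ set decomposes into strictly lower-level pieces in either case.
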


Let $\Gamma$ be any class from the Borel hierarchy. Observe that whenever one constructs a reduction from a $\Gamma$-complete set $A$ to some set $B$, then by the above theorem it immediately follows that $B$ is $\Gamma$-hard. Thus we mainly use the Wadge reduction for establishing a lower bound for given set construction.

\subsubsection{$\textit{TpcN}\in\Pi^0_3$}\label{upper_bound_tpcn}
Let us rewrite the condition defining a typical number in a different way. As we have that $\mathcal{L}_n$ converges to 1, then the set of typical numbers is a set of all $x\in [0,1]$ such that
\begin{equation*}
    \forall m\in\mathbb{N}\;\exists n_0\in\mathbb{N}\;\forall n>n_0 \;\frac{L_n(x)}{\log_2n}\in(1-\frac{1}{m}, 1+\frac{1}{m}).
\end{equation*}
Thus the set $TpcN$ is constructed in the following way:
\begin{equation*}
    \bigcap_{m\in\mathbb{N}}\;\bigcup_{n_0\in\mathbb{N}}\;\bigcap_{n>n_0} \;\{x\in [0,1]\colon \frac{L_n(x)}{\log_2n}\in(1-\frac{1}{m}, 1+\frac{1}{m})\}.
\end{equation*}
Fix some $n>0$ for further look at the characteristics of the set 
\begin{equation}\label{condition}
    A=\{x\in [0,1]\colon \frac{L_n(x)}{\log_2n}\in(1-\frac{1}{m}, 1+\frac{1}{m})\},
\end{equation}

\begin{lemma}\label{jest_Pi_3}
The set A is closed.
\end{lemma}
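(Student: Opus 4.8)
The plan is to show that $A$ is closed by proving that the function $x \mapsto L_n(x)$ is locally constant, hence continuous, on $X = \{0,1\}^{\mathbb{N}}$ equipped with the product metric. The crucial observation is that $L_n(x)$, the maximal length of a run of consecutive ones appearing within the first $n$ positions of $x$, depends only on the coordinates $x_1, \dots, x_n$. Since the metric $d$ generates the product topology, two sequences that agree on the first $n$ coordinates are within distance $\frac{1}{2^{n-1}}$ of one another, and more to the point, the cylinder set fixing the first $n$ coordinates is both open and closed.

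First I would make precise that $L_n$ is constant on each of these cylinders. Fix $x \in X$ and let $U = \{y \in X : y_1 = x_1, \dots, y_n = x_n\}$. By the definition of $L_n$, its value is determined entirely by the finite word $(x_1, \dots, x_n)$, so $L_n$ is constant on $U$. As $U$ is an open neighbourhood of $x$, this establishes that $L_n$ is locally constant, and therefore continuous as a map from $X$ to the discrete space $\mathbb{N} \cup \{0\}$.

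Next I would transfer this to the set $A$. Since $n > 0$ is fixed, the quantity $\log_2 n$ is a fixed positive constant (for $n \geq 2$; the case $n = 1$ forces $L_1(x) \in \{0,1\}$ and can be treated directly or excluded by the ambient quantification over large $n$). Consequently the map $x \mapsto \frac{L_n(x)}{\log_2 n}$ is itself locally constant, being a scalar multiple of $L_n$. The set $A$ is the preimage of the open interval $(1 - \frac{1}{m}, 1 + \frac{1}{m})$ under this locally constant map. But a locally constant function has open preimages of \emph{every} subset of its codomain, so in particular $A$ is open; and the preimage of the complement $\mathbb{R} \setminus (1 - \frac{1}{m}, 1 + \frac{1}{m})$ is likewise open, whence its complement $A$ is closed. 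Thus $A$ is clopen, and in particular closed.

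I do not anticipate a genuine obstacle here, as the argument reduces to the elementary fact that a function depending on only finitely many coordinates is locally constant. The only point requiring mild care is the bookkeeping: verifying that $L_n$ takes only finitely many values (indeed $0 \leq L_n(x) \leq n$) so that the interval $(1 - \frac{1}{m}, 1 + \frac{1}{m})$ pulls back to a finite union of clopen cylinders, and noting that the identification of a number $y \in [0,1]$ with its binary expansion does not disturb the topological conclusion, since the countably many points with two expansions do not affect closedness of the relevant cylinder sets. Having established that $A$ is closed for each fixed $m$ and $n$, the outer expression $\bigcap_m \bigcup_{n_0} \bigcap_{n > n_0} A$ is then seen to be $\Pi^0_3$ by reading off the quantifier structure, which is the ultimate purpose of the lemma.
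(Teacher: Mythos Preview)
Your proof is correct and follows essentially the same route as the paper's: both observe that $L_n$ depends only on the first $n$ coordinates, so the set in question is a finite union of cylinders (you frame this as local constancy of $L_n$, the paper writes the cylinders out explicitly as $\bigcup_{\omega\in\Omega_n}[\omega]$). The one place where the paper is slightly more precise is the passage from $\{0,1\}^{\mathbb{N}}$ to $[0,1]$, which it handles via the Closed Map Lemma (the summation map is continuous from a compact space to a Hausdorff space, hence closed) rather than your remark about the countably many dyadic points.
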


\begin{proof}
It is clear that as $n$ is fixed, then $L_n(x)\in \{0,1,...,n\}$ and $\log_2 n$ are fixed. Thus we have that there is a finite number of possibilities for initial sequences in binary expansions for which $x\in \{0,1\}^\mathbb{N}$ will satisfy the condition (\textnormal{\ref{condition}}). To find those sequences we need to determine a set $\Omega_n$ of blocks $\omega$ of length $n$ such that every sequence $x\in \{0,1\}^\mathbb{N}$ satisfying $x_i=\omega_i$ for $i=1,...,n$ is appropriate for the condition above. As $n$ is finite, then the number of admissible blocks is no greater than $\sum_{i=0}^n \binom{n}{i}$ and therefore finite. For given $\omega \in \Omega_n$ let $B(\omega, \frac{1}{2^{n}})$ denote a set of all sequences starting with block $\omega$, which we will later call a cylinder and denote by $[\omega]$ or $[\omega_1...\omega_n]$. As we consider $\{0,1\}^\mathbb{N}$ with product topology, then by continuous projections onto i-th coordinate $\Pi_i$ we obtain that
\begin{equation*}
    [\omega_1...\omega_n]=\bigcap_{i=1}^n \Pi^{-1}_i(\{\omega_i\}),
\end{equation*}
where $\omega_i\in \{0,1\}$ for $i=1,...,n$. We conclude that the set $[\omega_1...\omega_n]$ is closed, since the intersection of any family of closed sets is closed, and then the set of all admissible sequences $\bigcup_{\omega \in\Omega_n}[\omega]$ is closed. The map defined by (\ref{summation}) is continuous and therefore closed by Closed Map Lemma. Thus $A$ is closed.
\end{proof}
  By the above lemma one can see that $\textit{TpcN}$ is a Borel set and it belongs to $\Pi^0_3$ class, as countable intersections of closed sets are closed, countable sums of closed sets are $\Sigma^0_2$ and countable intersections of $\Sigma^0_2$ sets are $\Pi^0_3$.
\subsubsection{$\Pi^0_3$-hardness of \textit{TpcN}}\label{lower_bound_tpcn}
We need to define the map $f\colon\mathbb{N}^\mathbb{N}\to[0,1]^\mathbb{N}$ that will be used to concatenate specific finite blocks constructed for each term in given natural sequence $M(n)$. We want to create a binary sequence $x$ using $M(n)$ such that for each $n\in\mathbb{N}$ at the position $B_n=\sum_{i=1}^n i$ in the constructed sequence the value of $\mathcal{L}_{B_n}(x)$ is equal to the Taylor-based approximation of $\log B_n$.
\par Let $(a_n)$ be a sequence of real numbers such that
\begin{equation*}
    t(n)=\sum_{k=0}^{M(n)}\frac{\log_2^{(k)}(a_n)}{k!}(B_n-a_n)^k
\end{equation*}
are proper partial sums for Taylor series of $\log_2B_n$ for each $n\in\mathbb{N}$. Set $T(n)=2^{\frac{1}{M(n)}}t(n)$. For $n\in\mathbb{N}$ append $\min\{n,[T(n)]\}$ of ones to a null sequence, where $[x]$ is integer part of $x$. If $n-[T(n)]>0$, then append also $n-[T(n)]$ zeroes. This creates a finite sequence $\alpha_{B_n}...\alpha_{B_{n+1}-1}$. Define $f\colon\mathbb{N}^\mathbb{N}\to\{0,1\}^{\mathbb{N}}$ to be the concatenation of the above transformation for each term in $M(n)\in\mathbb{N}^\mathbb{N}$. Of course both $\mathbb{N}^\mathbb{N}$ and $\{0,1\}^\mathbb{N}$ spaces are products of the discrete topology spaces. Thus we can deduce continuity of a map from the fact that for two initially equal sequences their images are initially equal. Observe that for $f$ this is the case, since both $B_n, a_n$ are uniquely defined. Therefore if two distinct $M_1(n), M_2(n)$ are initially equal, then the values of $t(n)$ are initially equal and appended finite blocks $\alpha_{B_n}...\alpha_{B_{n+1}-1}$ are initially the same.
\par Now observe that if we took a sequence from $P_3$, then our approximation becomes more and more accurate as $n$ tends to infinity. If a sequence is not from $P_3$, then the adjustment $2^{\frac{1}{{M(n)}}}$ prevents the image from being typical. Note that the argument requires $n$ to be arbitrarily large, since initial blocks can be nowhere near the desired value.
\begin{lemma}\label{tpcn}
The preimage $f^{-1}(\textit{TpcN})=P_3$.
\end{lemma}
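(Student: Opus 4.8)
The plan is to establish the two inclusions separately, in each case reducing the typicality of $x=f(M)$ to the asymptotics of the single quantity $r(n):=\min\{n,[T(n)]\}$, the length of the run of ones that stage $n$ inserts at positions $B_n,\dots,B_{n+1}-1$. First I would record the block structure. Since $t(n)$ is comparable to $\log_2 B_n=O(\log n)$, for all large $n$ one has $[T(n)]<n$, so each block ends in at least one zero and runs of ones from different stages never merge; hence, up to the finitely many initial blocks,
\[
L_{B_{n+1}-1}(x)=\max_{1\le k\le n} r(k).
\]
For $B_n\le m<B_{n+1}$ the numerator $L_m(x)$ lies between $\max_{k<n}r(k)$ and $\max_{k\le n}r(k)$, while $\log_2 B_n\le\log_2 m\le\log_2 B_{n+1}$. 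Because $B_{n+1}/B_n\to 1$, we get $\log_2 m=(1+o(1))\log_2 B_n$ uniformly across the block, so that typicality of $x$, namely $\mathcal{L}_m(x)\to 1$, is \emph{equivalent} to
\[
\frac{\max_{k\le n} r(k)}{\log_2 B_n}\xrightarrow[n\to\infty]{}1.
\]
This disposes of the in-between positions once and for all.

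The second ingredient is the estimate $t(n)=(1+o(1))\log_2 B_n$. By the choice of $a_n$, the $t(n)$ are partial sums of a convergent Taylor expansion of $\log_2 B_n$ about $a_n$, and taking $a_n$ within the radius of convergence makes the Lagrange remainder a bounded (indeed small) absolute quantity; since $\log_2 B_n\to\infty$ this error is $o(\log_2 B_n)$ regardless of how many terms $M(n)$ are used. Combining this with $T(n)=2^{1/M(n)}t(n)$ and $[T(n)]=T(n)+O(1)$ gives the crucial relation
\[
\frac{r(n)}{\log_2 B_n}=2^{1/M(n)}\,(1+o(1)),
\]
which isolates $2^{1/M(n)}$ as the sole mechanism controlling typicality. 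For $P_3\subseteq f^{-1}(\textit{TpcN})$: if $\liminf_n M(n)=\infty$ then $M(n)\to\infty$, so $2^{1/M(n)}\to 1$ and $r(n)/\log_2 B_n\to 1$. For the maximum I would check both bounds, $\max_{k\le n}r(k)\ge r(n)=(1+o(1))\log_2 B_n$ and, for any $\varepsilon>0$, $r(k)\le(1+\varepsilon)\log_2 B_k\le(1+\varepsilon)\log_2 B_n$ for all but finitely many $k$, the exceptional small blocks being eventually dominated; hence the ratio above tends to $1$ and $x$ is typical.

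For the reverse inclusion $f^{-1}(\textit{TpcN})\subseteq P_3$ I would argue the contrapositive. If $\liminf_n M(n)<\infty$, then by the pigeonhole principle some value $c_0$ is attained infinitely often, say $M(n_j)=c_0$; along this subsequence $2^{1/M(n_j)}=2^{1/c_0}$ is a constant strictly larger than $1$, so $r(n_j)/\log_2 B_{n_j}\to 2^{1/c_0}>1$. Then $\max_{k\le n_j}r(k)/\log_2 B_{n_j}$ stays bounded away from $1$, whence $\mathcal{L}_{B_{n_j}}(x)\not\to 1$ and $x$ is not typical. Combining the two inclusions yields $f^{-1}(\textit{TpcN})=P_3$. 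The step I expect to be the main obstacle is the reduction in the first paragraph: verifying that $\log_2 m=(1+o(1))\log_2 B_n$ interacts with the numerator so that convergence (respectively divergence) of the ratio at the special positions $B_n$ genuinely propagates to \emph{all} $m$, together with the clean separation of runs by zeros. The Taylor remainder bound is conceptually routine; the real point, consistent with the heuristic that the correction factor $2^{1/M(n)}$ \emph{prevents the image from being typical} when $M$ fails to diverge, is that once $t(n)=(1+o(1))\log_2 B_n$ is secured, everything reduces to the behaviour of $2^{1/M(n)}$.
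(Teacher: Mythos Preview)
Your argument is correct and follows essentially the same route as the paper: check the ratio at the distinguished positions $B_n$, use $B_{n+1}/B_n\to 1$ to propagate to intermediate positions, and identify $2^{1/M(n)}$ as the sole obstruction to typicality. Your presentation is more explicit than the paper's in that you introduce $r(n)$, verify that runs from different stages do not merge, and reduce the question to $\max_{k\le n}r(k)/\log_2 B_n\to 1$; the paper leaves these points implicit and in the non-$P_3$ case bounds below by $2^{1/N}$ along a bounded subsequence rather than isolating a single repeated value $c_0$, but the content is the same.
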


\begin{proof}
Firstly observe that if we constructed a sequence $x$ using $M(n)\in P_3$, then, as $M(n)$ tends to infinity with respect to $n$, we conclude that for $\varepsilon>0$ there exists $n_0\in \mathbb{N}$ such that our approximation is $\varepsilon$-exact at the positions $B_{n}$ for $n\geq n_0$. The next point of $\varepsilon$-exactness occurs at $B_{n_0+1}$. With that in mind, consider some $n\in(B_{n_0},B_{n_0+1})\cap\mathbb{N}$. We want to check whether at the position $n$ in the constructed sequence $\mathcal{L}_{n}(x)$ is also converging to the desired ratio. Observe that the following inequalities hold:
\begin{equation*}
    \frac{\log_2 B_{n_0}-\varepsilon}{\log_2 B_{n_0+1}+\varepsilon}\leq \frac{L_n(x)}{\log_2 n}\leq \frac{\log_2 B_{n_0+1}+\varepsilon}{\log_2 B_{n_0}-\varepsilon}.
\end{equation*}
It is clear that $L_n(x)$ cannot be greater than its value at $B_{n_0}+1$ and cannot be smaller than its value at $B_{n_0}$. A similar reasoning gives the boundaries for $\log_2 n$. By the fact that $\lim_{n_0\to\infty}\frac{B_{n_0}}{B_{n_0+1}}=1$ we get $$\lim_{n\to\infty}\frac{L_n(x)}{\log_2 n}=1.$$
Now observe that, as $\frac{1}{x}\xrightarrow[x\to\infty]{} 0$, even a very poor approximation via Taylor series with a sequence $M(n)\in \mathbb{N}^{\mathbb{N}}\setminus P_3$ can yield an almost perfect result. Those approximations become indistinguishable for large $B_n$ because we are taking only the integer part of it. However, if $M(n)\in \mathbb{N}^{\mathbb{N}}\setminus P_3$, then $$\liminf_{n\to\infty} M(n)<\infty.$$ There exists an upper bound $N\in\mathbb{N}$ such that $M(n_k)< N$ for a bounded subsequence $M(n_k)$ of $M(n)$. Thus the amount of ones we append using Taylor-based transformation on that subsequence is bounded from below by $2^{\frac{1}{N}}\log_2(B_{n_k})$. We obtain that the value of $\mathcal{L}_n(y)$ for parts of $y$ constructed using $M(n_k)$ is bounded from below by $2^{\frac{1}{N}}>1$, since $N$ is finite. Thus $\mathcal{L}_n(y)$ cannot converge to 1. Consequently we have that $f(\mathbb{N}^{\mathbb{N}}\setminus P_3)\subset\textit{ExcN}=\{0,1\}^\mathbb{N}\setminus \textit{TpcN}$.
\end{proof}

\begin{corollary}
    The set $\textit{TpcN}$ is $\Pi_3^0$-complete.
\end{corollary}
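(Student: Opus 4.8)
The plan is to combine the two bounds established in the preceding subsections, since the corollary asserts precisely that $\textit{TpcN}$ sits at level three of the Borel hierarchy with no room to descend. Recall from Section \ref{classification} that a set is $\Pi_3^0$-complete exactly when it lies in $\Pi_3^0\setminus\Sigma_3^0$; thus I would organise the proof around verifying these two memberships separately, and the corollary becomes an assembly of facts already in hand rather than a fresh argument.

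For the upper bound I would simply invoke Subsection \ref{upper_bound_tpcn}. There Lemma \ref{jest_Pi_3} shows that each set $A$ of the form (\ref{condition}) is closed, and the discussion following it rewrites $\textit{TpcN}$ as a countable intersection over $m$ of countable unions over $n_0$ of countable intersections over $n>n_0$ of such closed sets. Reading off the hierarchy level step by step — closed sets, then countable unions giving $\Sigma_2^0$, then a countable intersection landing in $\Pi_3^0$ — yields $\textit{TpcN}\in\Pi_3^0$, so nothing further is needed here.

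For the lower bound the substance is already contained in Lemma \ref{tpcn}, so the remaining step is purely formal. The continuous map $f\colon\mathbb{N}^\mathbb{N}\to\{0,1\}^\mathbb{N}$ constructed in Subsection \ref{lower_bound_tpcn} satisfies $f^{-1}(\textit{TpcN})=P_3$, and $P_3$ is a $\Pi_3^0$-complete subset of $\mathbb{N}^\mathbb{N}$; in particular $P_3\notin\Sigma_3^0$. I would now apply the Wadge reduction, Theorem \ref{Wadge}, to the data $A=P_3$, $B=\textit{TpcN}$, $A=f^{-1}(B)$: since $P_3$ and $\textit{TpcN}$ must then occupy the same classes of the hierarchy, the fact that $P_3\notin\Sigma_3^0$ forces $\textit{TpcN}\notin\Sigma_3^0$. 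Concretely, were $\textit{TpcN}$ in $\Sigma_3^0$, its continuous preimage $P_3=f^{-1}(\textit{TpcN})$ would be too, contradicting the $\Pi_3^0$-hardness of $P_3$.

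Combining the two memberships gives $\textit{TpcN}\in\Pi_3^0\setminus\Sigma_3^0$, which is exactly $\Pi_3^0$-completeness. I do not expect any genuine obstacle at this stage: all the analytic difficulty was absorbed into verifying $f^{-1}(\textit{TpcN})=P_3$ in Lemma \ref{tpcn} and into the closedness computation of Lemma \ref{jest_Pi_3}. The only points deserving care are bookkeeping ones — confirming that the reduction points in the correct direction (it is $P_3$ that reduces to $\textit{TpcN}$, not conversely) and that $P_3$ is invoked as $\Pi_3^0$-complete rather than merely as a member of $\Pi_3^0$ — since it is the non-membership $P_3\notin\Sigma_3^0$ that does the real work in establishing hardness.
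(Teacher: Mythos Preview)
Your proposal is correct and follows essentially the same approach as the paper: both combine the upper bound from Subsection~\ref{upper_bound_tpcn} and Lemma~\ref{jest_Pi_3} with the lower bound obtained from Lemma~\ref{tpcn} via the Wadge reduction (Theorem~\ref{Wadge}), then invoke the definition of completeness from Section~\ref{borel_hierarchy_introduced}. Your version is somewhat more explicit about why the reduction yields $\textit{TpcN}\notin\Sigma_3^0$, but the logical structure is identical.
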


\begin{proof}
    Observe that combining the reasoning from Section \ref{upper_bound_tpcn} and Lemma \ref{jest_Pi_3} with Theorem \ref{Wadge} and Lemma \ref{tpcn}, we obtain that $\textit{TpcN}\in\Pi_3^0$ and is $\Pi_3^0$-hard. Following the comment on class completeness from Section \ref{borel_hierarchy_introduced}, the set of typical numbers is $\Pi_3^0$-complete.
\end{proof}

\subsection{Proof of Theorem \ref{theoremB}}\label{preface}
Before proceeding with the proof, let us discuss the techniques we will use and the steps we will follow. We will start with showing that the Champernowne constant is typical. We will modify that number in the later stages to obtain numbers which are only typical or only normal. To achieve that, we will use Theorem \ref{Nakai}. It is clear that both $\textit{TpcN}\setminus\textit{Normal}$ and $\textit{Normal}\setminus\textit{TpcN}$ are Borel sets. To examine their Borel complexity, we will apply a method introduced in \cite{Bill}, constructing a reduction composed of two mutually independent actions.

\subsubsection{Typical and normal number}\label{both}
Of course, there are uncountably many numbers that are both typical and normal. Let us focus on the Champernowne constant, which in base 2 is of the form
\begin{equation*}
    x=0.\;1\;10\;11\;100\;101\;110\;111\;...
\end{equation*}
Observe that in this form there is one natural number with binary expansion of length 1, 2 of length 2, 4 of length 3 and so on. Each of those blocks ends with a sequence of ones. It turns out that $\mathcal{L}_{n}(x)$ computed at the last positions of those ends converge to 1. We will use the same reasoning as in Lemma \ref{tpcn} to show that this number is indeed typical.
\begin{lemma}
The Champernowne constant expressed in base 2 is typical.
\end{lemma}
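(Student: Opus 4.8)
The plan is to mirror the sandwiching argument of Lemma \ref{tpcn}, using the natural ``checkpoints'' furnished by the block structure of $\mathcal{C}_2$. First I would record that the binary Champernowne constant is built from consecutive blocks, the $k$-th of which consists of all $2^{k-1}$ integers whose binary expansion has length $k$, namely $2^{k-1},2^{k-1}+1,\dots,2^k-1$. Writing
\[
B(k)=\sum_{j=1}^{k} j\,2^{j-1}=(k-1)2^{k}+1
\]
for the position at which the $k$-th block ends, I would use the points $B(k)$ as the analogue of the $B_{n_0}$ in Lemma \ref{tpcn}, together with the asymptotics $\log_2 B(k)=k+\log_2(k-1)+o(1)\sim k$.

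The quantity to control is $L_{B(k)}(x)$, and I claim $L_{B(k)}(x)=k$. The lower bound is immediate: the last integer of the $k$-th block is $2^{k}-1=\underbrace{1\cdots1}_{k}$, producing a run of exactly $k$ ones ending at position $B(k)$; it is preceded by $2^{k}-2=\underbrace{1\cdots1}_{k-1}0$, which caps the run on the left.

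The matching upper bound is the only genuinely combinatorial step, and I expect it to be the main obstacle. Any run of ones inside the first $B(k)$ digits either sits within a single integer (length $\le k$) or straddles a junction between consecutive integers $m,m+1$. For the junction case I would analyze the carry: if $m$ has $a$ trailing ones and $m$ is \emph{not} all ones, then $m$ and $m+1$ share the same bit-length $\ell\le k$, and incrementing turns the trailing $\underbrace{1\cdots1}_{a}$ into $\underbrace{0\cdots0}_{a}$, so the leading ones of $m+1$ number at most $\ell-a$; hence the combined run is at most $a+(\ell-a)=\ell\le k$. The only way to surpass this is a run passing \emph{through} an all-ones integer $2^{j}-1$ into its successor $2^{j}$, which contributes $j+1$; but for this to occur inside the first $B(k)$ digits one needs $j\le k-1$, since the successor $2^{k}$ of $2^{k}-1$ appears only at position $B(k)+1$. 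Thus $L_{B(k)}(x)=k$, and the same argument with $k$ replaced by $k+1$ gives $L_{B(k+1)}(x)=k+1$.

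Finally I would run the sandwich. Since $L_n(x)$ is non-decreasing in $n$ and $\log_2$ is increasing, for every $n\in(B(k),B(k+1)]$ one has
\[
\frac{L_{B(k)}(x)}{\log_2 B(k+1)}\;\le\;\frac{L_n(x)}{\log_2 n}\;\le\;\frac{L_{B(k+1)}(x)}{\log_2 B(k)}.
\]
Substituting $L_{B(k)}(x)=k$, $L_{B(k+1)}(x)=k+1$ and the asymptotics above, both outer fractions tend to $1$ as $k\to\infty$. Since every sufficiently large $n$ lies in some interval $(B(k),B(k+1)]$ with $k\to\infty$ as $n\to\infty$, this yields $\mathcal{L}_n(x)=\frac{L_n(x)}{\log_2 n}\to 1$, so $\mathcal{C}_2$ is typical.
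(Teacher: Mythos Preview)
Your argument is correct and follows essentially the same route as the paper: you use the same checkpoints $B(k)=p_k=(k-1)2^k+1$, establish $L_{B(k)}(x)=k$, and then run the identical sandwich between consecutive checkpoints. In fact your write-up is more complete than the paper's, which simply asserts $L_{p_n}(x)=n$ without the combinatorial justification you supply for the upper bound; the one small point you leave implicit is why a run of ones cannot span more than one junction, but this follows at once since two consecutive integers cannot both be all ones and any non-all-ones integer contains a zero.
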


\begin{proof}
Observe that at each position $p_n=\sum_{k=1}^n k 2^{k-1}=p_n=(n-1)2^{n}+1$ we have $L_{p_n}(x)=n$. Checking the condition (\ref{condition_above}), for sufficiently large $n\in\mathbb{N}$ we obtain the following:
\begin{equation*}
    \frac{L_{p_n}(x)}{\log_2 p_n}=\frac{n}{\log_2((n-1)2^{n}+1)}\approx\frac{n}{n+\log_2 n}\xrightarrow[n\to\infty]{}1.
\end{equation*}
Choose $\epsilon>0$ and let an arbitrarily large $n_0$ be such that for $p_{n}$ with $n\geq n_0$ the inequality $|L_{p_{n}}(x)-\log_2 p_{n}|<\varepsilon$ holds. Take a position $j\in(p_{n_0}, p_{n_0+1})\cap \mathbb{N}$. Similarly as in Lemma \ref{tpcn} we have the following:
\begin{equation*}
    \frac{n_0}{n_0+1}\approx\frac{\log_2 p_{n_0}-\varepsilon}{\log_2 p_{n_0+1}+\varepsilon}\leq \frac{L_j(x)}{\log_2 j}\leq \frac{\log_2 p_{n_0+1}+\varepsilon}{\log_2 p_{n_0}-\varepsilon}\approx\frac{n_0+1}{n_0}.
\end{equation*}
As $n_0\to\infty$ we obtain that $\lim_{j\to\infty} \frac{L_j(x)}{\log_2 j}=1$.

\end{proof}
\subsubsection{Strictly typical number}\label{typical only}
In the present section we want to reuse the calculations from Section \ref{both}, but bias the frequencies. As we know that the Champernowne constant can be expressed as
\begin{equation*}
    0.\;f_2(1)\;f_2(2)\;f_2(3)\;f_2(4)\;...,
\end{equation*}
where $f_2(n)$ is binary notation of $n$, let us introduce a function $g\colon\mathbb{N}\to 2^\mathbb{N}$ satisfying the following conditions (here $|f_2(n)|$ represents the length of binary notation of $n$ and $a^n$ is a finite block of length $n$ constructed by concatenating $n$ times the letter $a$):
\begin{equation*}
    g(n)=\left\{\begin{array}{ll}
         & 1^{|f_2(n)|},\;\textnormal{if there exists}\;k\in\mathbb{N}\;\textnormal{such that}\;n=2^k-1, \\
         & 0^{|f_2(n)|},\;\textnormal{otherwise}.
    \end{array}\right.
\end{equation*}
The number $y=0.\;g(1)\;g(2)\;g(3)...$ is of the following form:
\begin{equation*}
    y=0.\;1\;00\;11\;000\;000\;000\;111\;...
\end{equation*}
One can see that the computations from Section \ref{both} are valid also for $y$, but $y$ is clearly having frequencies for sequences of zeroes bigger than those for sequences for ones.
\subsubsection{Strictly normal number}\label{normal only}

Define $w(x)=\frac{1}{2^a}(x-1)+1$ for some $a\in\mathbb{N}_2$. Consider the number
\begin{equation*}
    z=0.\;[w(1)]_2\;[w(2)]_2\;[w(3)]_2...=0.\;\underbrace{1\;...\;1}_{2^a\;\textnormal{times}}\;\underbrace{10\;...\;10}_{2^a\;\textnormal{times}}\;\underbrace{11\;...\;11}_{2^a\;\textnormal{times}}\;...
\end{equation*}
This number is normal by Theorem \ref{Nakai}. Observe that as in the Champernowne constant, it has a pattern of sequences of ones, but of a different length and ending at a different position. This difference turns out to be crucial, resulting with $\mathcal{L}_n(x)$ converging to at least 2.
\begin{lemma}\label{normal_only_computations}
The number $z$ is exceptional.
\end{lemma}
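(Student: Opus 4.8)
The plan is to exploit the fact that $z$ is, block by block, the Champernowne constant with every binary string repeated $2^a$ times, so that the longest runs of ones are stretched by a factor of $2^a$ while their terminal positions are merely multiplied by $2^a$, i.e. their base-two logarithms change only by the additive constant $a$. Concretely, since $w(x)=\frac{x-1}{2^a}+1$ raises its integer part by one exactly every $2^a$ steps, the number $z$ is the concatenation, over $v=1,2,3,\dots$, of the blocks $\gamma_v$, where $\gamma_v$ consists of $2^a$ consecutive copies of $f_2(v)$. First I would record the relevant positions: writing $\ell(v)=|f_2(v)|$ for the binary length, the block $\gamma_v$ has length $2^a\ell(v)$, so block $v$ ends at position $q_v=2^a\sum_{j=1}^{v}\ell(j)$.

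Next I would locate the longest runs. The decisive blocks are the all-ones numbers $v=2^m-1$, for which $f_2(2^m-1)=1^m$ and hence $\gamma_{2^m-1}$ is a single run of $2^a m$ ones; the preceding block $f_2(2^m-2)=1^{m-1}0$ ends in $0$, so there is no backward extension, while the following block contributes at most one further $1$. Thus $L_{q_{2^m-1}}(z)=2^a m$, up to an additive $1$. The key arithmetic identity, using $\sum_{i=1}^m i\,2^{i-1}=(m-1)2^m+1$, is
\begin{equation*}
    q_{2^m-1}=2^a\sum_{j=1}^{2^m-1}\ell(j)=2^a\sum_{i=1}^{m} i\,2^{i-1}=2^a\big[(m-1)2^m+1\big]=2^a p_m,
\end{equation*}
where $p_m=(m-1)2^m+1$ is precisely the Champernowne terminal position of the previous lemma; consequently $\log_2 q_{2^m-1}=a+\log_2 p_m=m+\log_2(m-1)+a+o(1)$.

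I would then evaluate the ratio along this subsequence,
\begin{equation*}
    \mathcal{L}_{q_{2^m-1}}(z)=\frac{2^a m}{m+\log_2(m-1)+a+o(1)}\xrightarrow[m\to\infty]{}2^a\geq 2,
\end{equation*}
so $\mathcal{L}_n(z)$ has a subsequence tending to $2^a>1$ and therefore cannot converge to $1$; this is exactly the failure of condition (\ref{condition_above}), and $z$ is exceptional. To justify the stronger remark that $\mathcal{L}_n(z)\to 2^a$, I would add the same sandwiching used in the Champernowne lemma: for $n$ running between two consecutive all-ones blocks the numerator $L_n$ is frozen at $2^a m$ while $\log_2 n$ only grows from $\approx m$ to $\approx m+1$, so both the troughs $\tfrac{2^a m}{m+1}$ and the peaks $\tfrac{2^a m}{m}$ tend to $2^a$.

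The main obstacle is the bookkeeping that confirms $2^a m$ is genuinely the maximal run attained up to $q_{2^m-1}$, i.e. that no block strictly between two all-ones blocks produces a longer one. For $2^m\le v\le 2^{m+1}-2$ the string $f_2(v)$ has length $m+1$ but contains a $0$, so the longest run in $\gamma_v$ — the larger of the longest all-ones substring of $f_2(v)$ and the sum of its trailing and leading ones, the latter being the only run created at a copy boundary since each copy contains an interior $0$ — is at most $m<2^a m$; hence such blocks never overtake the all-ones runs, and $L_n$ stays frozen between consecutive all-ones blocks exactly as the sandwiching requires. Verifying this bound uniformly, together with the harmless boundary $+1$ contributions and the asymptotics $\log_2 p_m=m+\log_2(m-1)+o(1)$, is the only delicate point; everything else reduces directly to the Champernowne computation already carried out.
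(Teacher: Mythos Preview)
Your argument is correct and follows the same route as the paper: both evaluate $L_n$ and $\log_2 n$ at the terminal positions of the all-ones blocks, observing that these positions are just $2^a$ times the Champernowne positions $p_m=(m-1)2^m+1$ while the run length is multiplied by $2^a$. The paper is content with the cruder lower bound $\frac{L_{p_n}(z)}{\log_2 p_n}\gtrsim\frac{2^a}{a}\ge 2$ and stops there, whereas you push through to the exact limit $2^a$, supply the sandwiching for general $n$, and do the bookkeeping confirming that no intermediate block overtakes the all-ones run; your version is more careful and gives a sharper conclusion, but the underlying idea is identical.
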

\begin{proof}
Consider the position $p_n=\sum_{k=1}^n 2^a \,k 2^{k-1}=2^a((n-1)2^{n}+1)$. At this position $L_{p_n}(x)=n2^a$. Checking the condition (\ref{condition_above}), for sufficiently large $n\in\mathbb{N}$ we obtain that
\begin{equation*}
    \frac{L_{p_n}(x)}{\log_2 p_n}\approx\frac{n2^a}{(n+a)+\log_2 n}> \frac{n 2^a}{(n+\log_2 n)a}=\frac{n}{n+\log_2 n}\cdot\frac{2^a}{a}.
\end{equation*}
It is clear that depending on the choice of $a\in\mathbb{N}_2$ the limit ranges from 2 to $\infty$.
\end{proof}

\subsubsection{Subsets' hierarchy}\label{level4}
One can see that both sets $\textit{TpcN}\setminus\textit{Normal}$ and $\textit{Normal}\setminus\textit{TpcN}$ belong to $D_2(\Pi_3^0)$ class, as both $\textit{TpcN},\textit{Normal}$ are $\Pi_3^0$-complete. What remains is to find a suitable reduction from a $D_2(\Pi_3^0)$-complete set. To achieve that, we will follow the method introduced in \cite{Bill}. Namely, we will set two maps $f, g\colon \mathbb{N}^\mathbb{N}\to\{0,1\}^\mathbb{N}$ such that $f^{-1}(\textit{TpcN})=P_3=g^{-1}(\textit{Normal})$. Then, we will compose those reductions into one map $\phi$. The map $\phi$ will be a reduction from sets $C\setminus D$ and $D\setminus C$, where $C=\{a(n)\subset \mathbb{N}\colon a(2n)\to\infty\}, D=\{b(n)\subset \mathbb{N}\colon b(2n-1)\to\infty\}$, which are known to be $D_2(\Pi^0_3)$-complete, as stated in \cite{Bill}. Thus, it will follow that both $\textit{TpcN}\setminus\textit{Normal},\textit{Normal}\setminus\textit{TpcN}$ are $D_2(\Pi^0_3)$-complete sets.

\par Before we begin, let us state two crucial results from \cite{Ki}.
\begin{definition}
    Let $A$ be a subset of $\mathbb{N}$. We say that $A$ has density $d(A)$ if the limit $\lim_{n\to\infty}\frac{|A\cap\{1,...,n\}|}{n}=d(A)$.
\end{definition}
\begin{theorem}[\cite{Ki}, Theorem 5]\label{zero_density}
    Let $x\in\textit{Normal}$ and $x'$ be obtained by modifying the binary expansion of $x$ on a zero density subset of digits. Then $x'\in\textit{Normal}$.
\end{theorem}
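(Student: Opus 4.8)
The plan is to argue directly from the definition of normality that perturbing $x$ on a zero-density set of positions cannot alter the asymptotic frequency of any finite block. Fix a finite block $\omega$ of length $m$ and recall that normality of $x$ means $N(x,\omega,n)/n \to 2^{-m}$ for every such $\omega$. I want to compare the counting functions $N(x',\omega,n)$ and $N(x,\omega,n)$ and show that they differ by $o(n)$; a squeeze then forces the two frequencies to share the same limit.

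First I would let $A\subseteq\mathbb{N}$ denote the zero-density set of positions on which $x$ and $x'$ differ, so that by hypothesis $\lvert A\cap\{1,\dots,n\}\rvert = o(n)$. The key elementary observation is a locality principle: an occurrence of $\omega$ as a block $x_{i+1}\dots x_{i+m}$ depends on exactly $m$ consecutive digits, so changing a single digit at position $j$ can create or destroy an occurrence of $\omega$ only among the at most $m$ blocks whose window contains position $j$, namely those starting at positions $j-m+1,\dots,j$. Hence each modified position is responsible for at most $m$ discrepancies between the two counts.

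From this I would derive the bound
\[
    \bigl\lvert N(x',\omega,n) - N(x,\omega,n)\bigr\rvert \le m\,\bigl\lvert A\cap\{1,\dots,n\}\bigr\rvert + O(m),
\]
where the factor $m$ absorbs the locality principle and the additive $O(m)$ handles the partial blocks straddling the cutoff at position $n$. Dividing by $n$ and using $\lvert A\cap\{1,\dots,n\}\rvert/n \to 0$ gives
\[
    \left\lvert \frac{N(x',\omega,n)}{n} - \frac{N(x,\omega,n)}{n}\right\rvert
    \le \frac{m\,\lvert A\cap\{1,\dots,n\}\rvert + O(m)}{n}
    \xrightarrow[n\to\infty]{} 0.
\]
Since $N(x,\omega,n)/n \to 2^{-m}$ by normality of $x$, the squeeze yields $N(x',\omega,n)/n \to 2^{-m}$ as well. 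As $\omega$ was an arbitrary finite block, $x'$ satisfies the frequency condition for every block length, and is therefore normal.

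The main obstacle is bookkeeping rather than conceptual: making the locality count rigorous so that the multiplicative constant in front of $\lvert A\cap\{1,\dots,n\}\rvert$ is genuinely $m$ and does not secretly grow with $n$, and verifying that the blocks overlapping the truncation boundary contribute only a bounded error. Once the displayed inequality is established with an honest $o(n)$ right-hand side, the conclusion is immediate from the definition of density together with the assumed normality of $x$.
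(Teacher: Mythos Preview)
The paper does not supply its own proof of this statement: Theorem~\ref{zero_density} is quoted from Ki and Linton \cite{Ki} and invoked as a black box, so there is no in-paper argument to compare against. Your proposal is nonetheless correct and is essentially the standard proof. The locality bound is exactly right: each position $j\in A$ can affect only the at most $m$ windows $\{j-m+1,\dots,j\}$ that contain it, giving $\lvert N(x',\omega,n)-N(x,\omega,n)\rvert\le m\,\lvert A\cap\{1,\dots,n\}\rvert$; the extra $O(m)$ boundary term is harmless (in fact unnecessary once one fixes the convention that $N(\cdot,\omega,n)$ counts only full occurrences within the first $n$ digits). Dividing by $n$ and using $d(A)=0$ finishes the argument as you wrote. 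There is no hidden growth in the constant---it is literally $m$, independent of $n$---so the ``main obstacle'' you flag is not a real obstacle.
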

\begin{theorem}[\cite{Ki}, Theorem 3]\label{g_reduction}
    There exists a reduction from $P_3$ to the set $D_0$ of all subsets of $\mathbb{N}$ with zero density.
\end{theorem}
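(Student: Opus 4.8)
The plan is to build an explicit continuous map $h\colon\mathbb{N}^\mathbb{N}\to\{0,1\}^\mathbb{N}$, where a subset $A\subseteq\mathbb{N}$ is identified with its characteristic sequence, and to arrange that $h^{-1}(D_0)=P_3$. The guiding observation is that a set has density zero exactly when its \emph{upper} density $\overline{d}(A)=\limsup_{N\to\infty}\frac{|A\cap\{1,\dots,N\}|}{N}$ vanishes: the lower density is always nonnegative, so $\overline d(A)=0$ forces $d(A)=0$, while $\overline d(A)>0$ means the limit is not $0$ and hence $A\notin D_0$. It therefore suffices to construct $h$ so that $\overline d\big(h(a)\big)=\frac{1}{\liminf_n a_n}$, with the convention $\tfrac1\infty=0$; then $\overline d(h(a))=0$ precisely when $\liminf_n a_n=\infty$, i.e. precisely when $a\in P_3$.

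First I would process the input sequence $a=(a_n)$ coordinate by coordinate, appending to the output one block $\beta_n$ for each $a_n$. Maintaining the running length $N_{n-1}$ and the running number of ones $O_{n-1}$ after the first $n-1$ blocks, I steer the running frequency towards the target $\tau_n=1/a_n$: if the current frequency $O_{n-1}/N_{n-1}$ lies below $\tau_n$, I let $\beta_n$ be a run of ones long enough to raise it to (approximately) $\tau_n$, and if it lies above $\tau_n$, I let $\beta_n$ be a run of zeros long enough to lower it to $\tau_n$. Solving $(O_{n-1}+b)/(N_{n-1}+b)=\tau_n$ and $O_{n-1}/(N_{n-1}+z)=\tau_n$ shows that the required lengths $b,z$ are finite and explicitly determined by $(N_{n-1},O_{n-1},a_n)$, so each block is a finite word; a trivial convention, such as appending a single symbol, handles the degenerate cases where the target is already met or where $N_0=0$, keeping every block nonempty so that the output is infinite. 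Since $\beta_n$ depends only on $a_1,\dots,a_n$, the first $N_n$ output coordinates depend on only finitely many input coordinates, which yields continuity of $h$ exactly as in Section~\ref{lower_bound_tpcn}.

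Finally I would verify the upper-density identity. Because each block is a \emph{monotone} run, the running frequency moves monotonically from $\approx 1/a_{n-1}$ to $\approx 1/a_n$ across block $\beta_n$, so every intermediate frequency is bounded by $\max(1/a_{n-1},1/a_n)$; consequently $\limsup_N(\text{frequency at }N)=\limsup_n 1/a_n=1/\liminf_n a_n$, the block endpoints realizing the $\limsup$. This gives $\overline d(h(a))=1/\liminf_n a_n$, hence $h^{-1}(D_0)=P_3$, as desired. The main obstacle is precisely the ``masking'' phenomenon: a naive fixed-block encoding that records each small value $a_n$ as a local burst of ones fails, because intervening large values $a_k$ can be encoded as such long stretches of zeros that they dilute the global density to $0$, destroying the backward implication. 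The steering construction circumvents this by repeatedly \emph{re-raising} the running frequency to $1/a_n$ at every step, so that each of the infinitely many indices with $a_n=\liminf_k a_k$ produces an output prefix of frequency $\approx 1/\liminf_k a_k$; passing to $\overline d$ rather than $d$ is what makes these recurrent peaks, rather than the diluted valleys between them, decisive.
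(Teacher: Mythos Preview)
The paper does not prove this theorem; it is quoted verbatim from Ki and Linton \cite{Ki} and used as a black box. So there is no in-paper argument to compare against, and your proposal should be read as a self-contained proof of the cited result.

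Your strategy---steer the running frequency of ones to $1/a_n$ at step $n$ so that $\overline d(h(a))=1/\liminf_n a_n$---is the right idea and the analysis of the monotone runs is correct. There is, however, one genuine edge case you glossed over: when $a_n=1$ the target is $\tau_n=1$, and your equation $(O_{n-1}+b)/(N_{n-1}+b)=\tau_n$ has no finite solution once a single zero has been emitted, so the claim that ``the required lengths $b,z$ are finite'' fails exactly there. Since sequences outside $P_3$ may have $a_n=1$ infinitely often, this is not a vacuous case. The fix is one line: replace the target $1/a_n$ by $1/(a_n+1)$ (or by $\min(1/a_n,1/2)$); then $1-\tau_n\ge 1/2$ uniformly, the block lengths are always finite, and $\liminf_n a_n=\infty$ iff $\liminf_n(a_n+1)=\infty$, so the reduction property is unaffected. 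With that adjustment (and rounding the computed $b,z$ to integers, which perturbs the endpoint frequency by at most $1/N_n\to 0$), your argument goes through.
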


 Let $g$ be the reduction from Theorem \ref{g_reduction} and $w=(w_i)_{i=1}^\infty$ be a binary sequence. Define $g'\colon \mathbb{N}^\mathbb{N}\to\{0,1\}^\mathbb{N}$ to act in the following way: to a given natural sequence $a_n$ associate a subset of $\mathbb{N}$ using the map $g$. Then modify the sequence $w$ on the positions $g(a_n)$ by changing the digits on those positions to 0. Thus, the number $w'=(w'_i)_{i=1}^\infty$, where
\begin{equation*}
    w'_i=\left\{
    \begin{split}
        &0,\;\text{if } i\in g(a_n),\\
        &w_i, \;\text{otherwise},
    \end{split}
    \right.
\end{equation*}
is the image $g'(a_n)$. One can see that if $w$ is normal, then the map $g'$ is a reduction from $P_3$ to $\textit{Normal}$, as it follows the reasoning from \cite{Ki}. We will denote by $g'(a_n)[w]$ the action of $g'(a_n)$ over the sequence $w$.

Let $B_n=2^n$ and $w=(w_i)_{i=1}^\infty$ be a binary sequence. For given natural sequence $a_n$ define $T(n)$ in the same way as in Section \ref{lower_bound_tpcn}. Construct a block $bl_n=(0,1^{l_n}, 0)$ by concatenating the sequence of consecutive ones of length $l_n={\min\{[T(n)], 2n\}}$ with a zero on each end. Define $f'\colon \mathbb{N}^\mathbb{N}\to \{0,1\}^\mathbb{N}$ to modify for each $n\in\mathbb{N}_5$ the sequence $w$ at the ends of initial segments $[w_1,...,w_{2^n}]$ by changing the last $|bl_n|$ digits in those segments to $bl_n$.
Thus, the number $w'=(w'_i)_{i=1}^\infty$, where $(w'_i)_{i=1}^{\infty}$ is defined by concatenation of consecutive blocks $[w'_1]=[w_1]$,
\begin{equation*}
[w'_{2^{n-1}+1},..., w'_{2^n}]=
\left\{
    \begin{split}
        &[w_{2^{n-1}+1},...,w_{2^n}],\;1\leq n\leq 4,\\
        &([w_{2^{n-1}+1},...,w_{2^n-|bl_n|}],bl_n),\;n\geq 5,
    \end{split} 
    \right.
\end{equation*}
is the image $f'(a_n)$. One can see that if $\limsup_{n\to\infty}\mathcal{L}_n(w)<1$, then the map $f'$ is a reduction from $P_3$ to $\textit{TpcN}$, as it follows the reasoning from Lemma \ref{tpcn}. We will denote by $f'(a_n)[w]$ the action of $f'(a_n)$ over the sequence $w$.

Let $w\in\textit{Normal}\setminus\textit{TpcN}$ be a number such that $\limsup \mathcal{L}_n(w)<1$. We will present one construction of such number, which directly follows from \cite[Section 5.1]{Madritsch}. For simplicity, denote by $\omega^l$ a concatenation of a block $\omega$ with itself $l$ times. For each $i\in\mathbb{N}$ let
\begin{equation*}
    w_i=(p_1^{\lceil i2^i\log i\rceil},...,p_{2^i}^{\lceil i2^i\log i\rceil})
\end{equation*}
be a block constructed as a concatenation of all blocks $p_j\in\{0,1\}^i$, where each block is raised to the same power $\lceil i2^i\log i\rceil$.

\begin{theorem}[\cite{Madritsch}, Section 5.1]\label{omega_konstrukcja}
    Let $(\omega_j)_{j=1}^\infty=(w_1^{l_1}, w_2^{l_2}, w_3^{l_3},...)$ be an infinite concatenation of blocks $w_i$, where for each block $w_i$ the blocks $p_i$ are arranged in an arbitrary order and each $w_i$ is raised to the power $l_i=i^{2^i}$. The number $\omega=(\omega_j)_{j=1}^\infty$ is normal.
\end{theorem}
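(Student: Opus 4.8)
The plan is to verify normality directly from the definition: for every $m\in\mathbb{N}$ and every block $v\in\{0,1\}^m$ I will show that the frequency of $v$ in $\omega$ equals $2^{-m}$. Fix $m$ and $v$ throughout, write $N=\lceil i2^i\log i\rceil$ so that $|w_i|=2^i\cdot i\cdot N$, and set $L_i:=|w_i^{l_i}|=l_i|w_i|$ with $l_i=i^{2^i}$ and $S_i=\sum_{k\le i}L_k$, the length of the prefix of $\omega$ ending with $w_i^{l_i}$. First I would record two growth facts, both stemming from $l_i=i^{2^i}$ being doubly exponential in $i$ while $|w_i|$ is only singly exponential: (a) $|w_i|=o(S_{i-1})$, since already $l_{i-1}=(i-1)^{2^{i-1}}$ dwarfs $|w_i|$; and (b) $L_i/S_i\to 1$, i.e. each new stage dominates the entire accumulated prefix, because $\log(L_i/L_{i-1})\sim 2^{i-1}\log i\to\infty$. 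These two estimates are exactly what let me neglect boundary pieces and earlier stages in the limit.

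The combinatorial heart is to show that, for $i\ge m$, the number $N_i(v)$ of occurrences of $v$ inside a single copy of $w_i$ satisfies $N_i(v)/|w_i|\to 2^{-m}$ as $i\to\infty$, uniformly in $v$. Since $w_i$ is the concatenation $p_1^{N}\cdots p_{2^i}^{N}$ of all $2^i$ words of length $i$, each repeated $N$ times, I would count the length-$m$ windows inside each periodic block $p_j^{N}$: by periodicity these windows are, up to the final $m-1$ positions, exactly $N$ copies of the multiset of cyclic length-$m$ windows of $p_j$. Summing the cyclic count over all $2^i$ words gives $i\cdot 2^{i-m}$ occurrences of $v$ per period, because for each fixed cyclic starting position the $m$ read digits range uniformly over $\{0,1\}^m$. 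Hence $\sum_j N_i(v\text{ in }p_j^N)=Ni\,2^{i-m}+O(i2^{i-m})$, and adding the $O(2^i m)$ windows straddling the $2^i$ junctions between consecutive $p_j^{N}$ gives $N_i(v)=Ni\,2^{i-m}+O(i2^{i-m})+O(2^i m)$. Dividing by $|w_i|=Ni\,2^i$ yields $N_i(v)=2^{-m}|w_i|(1+o(1))$ with error uniform in $v$; this is where the large exponent $N$ pays off, forcing the junction term $O(2^i m)$ to be negligible against the main term.

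Finally I would locate an arbitrary $n$ in the unique stage with $S_{i-1}\le n<S_i$, writing $n=S_{i-1}+r|w_i|+\delta$ with $0\le r\le l_i$ and $0\le\delta<|w_i|$, and estimate $N(\omega,v,n)/n$. The count splits as the head count $A_{i-1}$ through $S_{i-1}$, plus $r\,N_i(v)$, plus the contribution of the partial copy, which is at most $\delta<|w_i|$. Setting $\epsilon_i:=N_i(v)/|w_i|-2^{-m}\to 0$ by Step~2, I would kill the partial-copy term using $|w_i|=o(S_{i-1})=o(n)$, turn the middle term into a $2^{-m}$ contribution plus an $\epsilon_i$ error via the denominator bound $n\ge r|w_i|$, and control the head via $A_{i-1}/S_{i-1}\to 2^{-m}$, which follows from Step~2 and (b) by a weighted-average (Cesàro-type) estimate; the finitely many stages $i<m$ only contribute the fixed length $S_{m-1}=o(n)$. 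All error terms then tend to $0$, so $N(\omega,v,n)/n\to 2^{-m}$, and since $m$ and $v$ were arbitrary, $\omega$ is normal.

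I expect the hard part to be Step~2: the careful bookkeeping of length-$m$ windows inside the periodic blocks $p_j^{N}$ (the cyclic-versus-linear counting and the uniformity over all $2^m$ choices of $v$), together with checking that the junction and periodic-edge terms are genuinely of lower order. Step~3 is then a routine, if slightly delicate, limiting argument whose only real inputs are the two domination estimates from Step~1.
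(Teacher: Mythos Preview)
The paper does not actually prove this theorem: it is quoted verbatim as a result from \cite{Madritsch}, Section~5.1, and is used as a black box. So there is no ``paper's own proof'' to compare against; you are supplying an argument where the paper defers to the literature.

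Your direct frequency-counting argument is correct in outline and would stand on its own. The crucial combinatorial point in Step~2---that $\sum_{j}c_j(v)=i\cdot 2^{i-m}$ because for each fixed cyclic starting position the $m$ read digits (distinct since $m\le i$) range uniformly over $\{0,1\}^m$ as $p_j$ runs over all of $\{0,1\}^i$---is exactly right, and the error terms $O(i\,2^{i-m})$ from the last partial period and $O(m\,2^i)$ from the $2^i-1$ junctions are both $o(|w_i|)$ once one uses $N\sim i2^i\log i$. The growth facts (a) and (b) in Step~1 are easily checked from $l_i=i^{2^i}$, and Step~3 is a routine Ces\`aro/Toeplitz averaging using (b) together with Step~2. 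One small omission: in Step~3 you should also note that the $O(rm)$ windows straddling the boundaries between the $r$ full copies of $w_i$ (and between the head and the first copy) are negligible, since $rm/n\le m/|w_i|\to 0$; this is implicit but worth making explicit.

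The Madritsch--Mance paper proves a much more general statement (construction of $\mu$-normal sequences for arbitrary shift-invariant measures $\mu$) via a more abstract route; your argument is the elementary specialisation to the uniform Bernoulli measure, and is perfectly adequate for the purpose at hand.
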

Assume that for each $i\in\mathbb{N}$ the order of blocks $p_i$ is induced by the natural order of $\mathbb{N}$. Define $\omega$ to be a normal number from Theorem \ref{omega_konstrukcja} constructed with respect to that order.
\begin{proposition}\label{omega_jedynki}
    For each $i\in\mathbb{N}$, the sequence $1^{\lceil i2^i\log i\rceil}$ is the maximal sequence of consecutive ones in $w_i$. Moreover, the length of every other sequence of consecutive ones is bounded by $2i$.
\end{proposition}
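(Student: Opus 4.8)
The plan is to exploit the rigid block structure of $w_i$. Recall that $w_i=(p_1^{N_i},\dots,p_{2^i}^{N_i})$ with $N_i=\lceil i2^i\log i\rceil$, where $p_1,\dots,p_{2^i}$ runs through all of $\{0,1\}^i$ in the natural order, so that $p_1=0^i$ and the unique all-ones block $p_{2^i}=1^i$ sits last; note also that its left neighbour $p_{2^i-1}=1^{i-1}0$ ends in a $0$. The key structural observation is that a maximal run of consecutive ones in $w_i$ either lies inside a single power $p_j^{N_i}$ or straddles exactly one junction $p_j^{N_i}p_{j+1}^{N_i}$; it can never cross an entire interior power, because for $j<2^i$ the block $p_j$ contains a $0$, and so a full copy of $p_j$ can never be traversed by a run of ones.

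First I would isolate the all-ones block. Since $p_{2^i}=1^i$, the power $p_{2^i}^{N_i}$ is itself a solid run of ones; it is flanked on the left by the terminal $0$ of $p_{2^i-1}^{N_i}$ and on the right by the end of $w_i$ (and, inside $\omega$, by a further string of zeros). Hence this run is genuinely isolated, and it is the longest run in $w_i$; this gives the first assertion.

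For the second assertion I would bound every other run. For $j<2^i$ let $s_j$ and $e_j$ denote the numbers of leading and trailing ones of $p_j$. Since $p_j\neq1^i$ one has $s_j+e_j\le i-1$, for otherwise the leading and trailing ones would exhaust $p_j$ and force $p_j=1^i$; likewise every run internal to a single copy of $p_j$ has length at most $i-1$. A run confined to $p_j^{N_i}$ thus has length at most $\max\{i-1,\;e_j+s_j\}\le i-1$, the quantity $e_j+s_j$ accounting for a run across the junction between two consecutive copies of $p_j$. A run across the junction between two distinct powers $p_j^{N_i}$ and $p_{j+1}^{N_i}$ with $j+1<2^i$ has length at most $e_j+s_{j+1}\le(i-1)+(i-1)=2i-2\le 2i$. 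The only junction not yet covered is $p_{2^i-1}^{N_i}p_{2^i}^{N_i}$, but $e_{2^i-1}=0$, so this junction creates no new run, the ones there being exactly the isolated all-ones run already handled. Combining the cases, every run of ones other than the maximal one has length at most $2i$.

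The main difficulty is the boundary bookkeeping rather than any single inequality. The delicate points are verifying that a run can straddle at most one junction—this is precisely where the hypothesis $p_j\neq1^i$ for $j<2^i$ enters—and checking that the left neighbour of the all-ones block ends in $0$, so that the maximal run neither swallows ones from the preceding power nor is rivalled by some junction run. Once this is in place, the estimates $s_j+e_j\le i-1$ and $e_j+s_{j+1}\le 2i-2$ are immediate, and the proposition follows.
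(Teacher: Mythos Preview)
Your proof is correct and follows essentially the same approach as the paper: both arguments rest on the observation that every $p_j$ with $j<2^i$ contains a zero, so a run of ones away from the final all-ones block cannot extend across a full copy of any $p_j$, which caps its length at roughly $2i$. The paper compresses this into the single remark that ``there is at least one zero in every $2i$ consecutive positions,'' whereas you unpack the same pigeonhole into an explicit case split (internal to a copy of $p_j$, across a junction of two copies of the same $p_j$, across a junction between $p_j^{N_i}$ and $p_{j+1}^{N_i}$) and sharpen the bound to $2i-2$; this extra bookkeeping is not needed for the statement but makes the argument more transparent.
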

\begin{proof}
    By the choice of the ordering of blocks $p_j$, we have that $p_{2^i-1}=1^{i-1}0$. Moreover, every block $w_i$ starts with 0. This implies that the sequence $1^{\lceil i2^i\log i\rceil}$ is separated by a zero on each end. On the other hand, observe that each block $p_j\neq 1^i$ contains at least one zero. Thus, besides the last block $1^{\lceil i2^i\log i\rceil}$, there is at least one zero in every $2i$ consecutive positions of $w_i$. The assertion follows immediately.
\end{proof}

\begin{lemma}\label{omega_nietypowe}
    The limit $\lim_{n\to\infty}\mathcal{L}_n(\omega)=\infty$.
\end{lemma}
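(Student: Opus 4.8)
The plan is to pinpoint the positions at which the run-length function $L_n(\omega)$ jumps and then to compare its value there with $\log_2 n$. Write $|w_i|$ for the length of the block $w_i$ and put $S_i=\sum_{k=1}^{i}l_k|w_k|$ (with $S_0=0$) for the position in $\omega$ at which the segment $w_i^{l_i}$ ends. By Proposition \ref{omega_jedynki} the longest run of ones inside a single $w_i$ has length $R_i:=\lceil i2^i\log i\rceil$, and, because of the natural ordering, this run occupies the very end of $w_i$ (it is produced by the final block $p_{2^i}=1^i$), while every other run inside $w_i$ has length at most $2i$. Since each $w_i$ begins with $p_1=0^i$, no run of ones crosses the junction between two consecutive copies of $w_i$ nor the junction between $w_i^{l_i}$ and $w_{i+1}^{l_{i+1}}$. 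Hence the first occurrence of a run of length $R_i$ is completed at position $q_i:=S_{i-1}+|w_i|$, and, as $R_i$ is increasing and dominates the short runs of length $\le 2(i+1)$ met afterwards, we have $L_n(\omega)=R_i$ for every $n\in[q_i,q_{i+1})$.

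It remains to estimate $R_i$ against $\log_2 q_{i+1}$. The block $w_i$ is a concatenation of $2^i$ subblocks $p_j^{R_i}$, each of length $iR_i$, so $|w_i|=2^i\cdot i\cdot R_i$ and therefore $\log_2|w_i|=O(i)$. On the other hand $\log_2 l_i=\log_2 i^{2^i}=2^i\log_2 i$, whence $\log_2(l_i|w_i|)=2^i\log_2 i+O(i)$. Because $\log_2(l_k|w_k|)$ grows like $2^k\log_2 k$, the sum defining $S_i$ is dominated by its last term, and we obtain $\log_2 S_i=2^i\log_2 i\,(1+o(1))$. Consequently
\begin{equation*}
    \frac{R_i}{\log_2 S_i}=\frac{\lceil i2^i\log i\rceil}{2^i\log_2 i\,(1+o(1))}
\end{equation*}
is of order $i$ (the factor $\log i/\log_2 i$ being a positive constant), and so it tends to infinity with $i$.

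The main obstacle is to pass from this statement about the special positions $q_i$ to the genuine limit $\lim_{n\to\infty}\mathcal{L}_n(\omega)=\infty$: between two jumps the numerator $L_n$ is frozen at $R_i$ while $\log_2 n$ keeps increasing, so $\mathcal{L}_n$ could in principle dip. On the interval $[q_i,q_{i+1})$ the function $\mathcal{L}_n=R_i/\log_2 n$ is decreasing, so its infimum is attained at $n=q_{i+1}-1$. Since $|w_{i+1}|$ is negligible compared with $S_i$, we have $q_{i+1}=S_i+|w_{i+1}|\le 2S_i$ for large $i$, whence $\log_2(q_{i+1}-1)\le 1+\log_2 S_i$ and
\begin{equation*}
    \min_{q_i\le n<q_{i+1}}\mathcal{L}_n(\omega)=\frac{R_i}{\log_2(q_{i+1}-1)}\ge\frac{R_i}{1+\log_2 S_i}.
\end{equation*}
By the previous paragraph the right-hand side is of order $i$, hence tends to infinity. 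As every sufficiently large $n$ lies in exactly one interval $[q_i,q_{i+1})$ with $i\to\infty$, the minimum of $\mathcal{L}_n$ over these intervals diverges, and therefore $\mathcal{L}_n(\omega)\to\infty$. The only remaining routine checks are the length count $|w_i|=2^i iR_i$ and the domination of the series $S_i$ by its last term.
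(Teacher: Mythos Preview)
Your argument is correct and follows the paper's strategy: locate checkpoint positions, show that $L_n/\log_2 n$ diverges there, and then cover the intermediate $n$ by a monotonicity bound. The paper takes its checkpoints at the ends of the full segments $w_i^{l_i}$, whereas you take them at the end of the \emph{first} copy of each $w_i$; your asymptotics for $\log_2 S_i$ (correctly using $l_i=i^{2^i}$) are in fact cleaner than the paper's own estimates.

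One small slip worth noting: the claimed equality $L_n(\omega)=R_i$ on the whole interval $[q_i,q_{i+1})$ is not quite right, since near the right endpoint the terminal run of ones in the first copy of $w_{i+1}$ is already partially visible and can exceed $R_i$; but only the inequality $L_n\ge R_i$ is needed for your displayed lower bound $\min_{q_i\le n<q_{i+1}}\mathcal{L}_n(\omega)\ge R_i/(1+\log_2 S_i)$, so the conclusion is unaffected.
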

\begin{proof}
    We know that at the end of each block $w_i$ there is a sequence of consecutive ones of length $i2^i\log i\leq |1^{\lceil i2^i\log i\rceil}|\leq i2^i(\log i+1)$. By Proposition \ref{omega_jedynki}, one can see that this sequence is maximal for all blocks $w_1^{l_1},...,w_i^{l_i}$. Thus we can evaluate the function $\mathcal{L}_n(\omega)$ at the ends of consecutive blocks $w_i^{l_i}$, $B_n=\sum_{i=1}^{n}|w_i^{l_i}|$, where $i^{2i+1}2^{2i}\log i\leq |w_i^{l_i}|\leq i^{2i+1}2^{2i}(\log i+1)$. Thus
    \begin{equation*}
        \frac{L_{B_n}(\omega)}{\log_2 B_n}\geq\frac{n2^n\log n}{\log_2(\sum_{i=1}^n i^{2i+1}2^{2i}(\log i+1))}>\frac{n2^n\log n}{\log_2 (2n)^{2n+3}}\xrightarrow[n\to\infty]{}\infty.
    \end{equation*}
    Moreover, for each $k\in[B_{n}, B_{n+1}]\cap\mathbb{N}$ we have that $$\frac{L_k(\omega')}{\log_2 k}>\frac{L_{B_n}(\omega')}{\log_2 B_{n+1}}\xrightarrow[n\to\infty]{}\infty,$$ which completes the proof.
\end{proof}
\begin{lemma}\label{omega'}
    Let $\omega'$ be obtained from $\omega$ by changing the maximal sequence of consecutive ones $1^{\lceil i2^i\log i\rceil}$ to a sequence $0^{\lceil i2^i\log i\rceil}$ in every $w_i$. Then $\omega'$ is normal.
\end{lemma}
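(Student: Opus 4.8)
The plan is to realize $\omega'$ as a modification of the normal number $\omega$ (Theorem \ref{omega_konstrukcja}) along a digit set of zero density, and then to apply Theorem \ref{zero_density}. Writing $S\subseteq\mathbb{N}$ for the set of positions on which $\omega$ and $\omega'$ disagree, the whole statement thus reduces to verifying $d(S)=0$.

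First I would describe $S$ explicitly. By Proposition \ref{omega_jedynki}, every copy of the block $w_i$ contains a single maximal run of consecutive ones $1^{\lceil i2^i\log i\rceil}$ sitting at its end, and it is exactly these digits that are flipped to zeros when forming $\omega'$. Since $w_i$ occurs $l_i$ times inside $w_i^{l_i}$, the superblock $w_i^{l_i}$ contributes $l_i\lceil i2^i\log i\rceil$ elements to $S$, while its total length is $l_i|w_i|$ with $|w_i|=2^i\,i\,\lceil i2^i\log i\rceil$ (the concatenation of $2^i$ sub-blocks $p_j^{\lceil i2^i\log i\rceil}$, each of length $i\lceil i2^i\log i\rceil$). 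Consequently the flipped digits make up a fraction
\[
\frac{\lceil i2^i\log i\rceil}{|w_i|}=\frac{1}{2^i\,i}
\]
of each copy of $w_i$, equivalently of each superblock, and this fraction tends to $0$ as $i\to\infty$; note that the factor $l_i$ cancels, so the ambiguities in the exact value of $l_i$ are irrelevant here.

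The main—if routine—obstacle is to promote this per-block estimate to a genuine density statement, since an arbitrary prefix of length $n$ may stop in the interior of some superblock $w_k^{l_k}$. I would argue as follows. Given $\varepsilon>0$, pick $I$ with $\frac{1}{2^i i}<\varepsilon$ for all $i\geq I$, and let $N_I$ be the length of the initial part $w_1^{l_1}\cdots w_{I-1}^{l_{I-1}}$. For $n>N_I$ I split $|S\cap\{1,\dots,n\}|$ into the contribution from the first $N_I$ positions, bounded by the constant $N_I$, and the contribution from positions beyond $N_I$. The latter lies inside superblocks $w_i^{l_i}$ with $i\geq I$; on each complete such block the flipped digits occupy a fraction at most $\varepsilon$, and a prefix ending inside the topmost block $w_k^{l_k}$ adds at most one further run, of length $\lceil k2^k\log k\rceil$, which is negligible compared to $n$ because $n\geq N_k$ exceeds even $|w_{k-1}^{l_{k-1}}|$. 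Hence
\[
\frac{|S\cap\{1,\dots,n\}|}{n}\leq\frac{N_I}{n}+\varepsilon+o(1),
\]
and letting $n\to\infty$ gives $\limsup_n |S\cap\{1,\dots,n\}|/n\leq\varepsilon$. Since $\varepsilon$ was arbitrary, $d(S)=0$.

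Finally, $\omega$ is normal by Theorem \ref{omega_konstrukcja}, and $\omega'$ differs from it precisely on the zero-density set $S$; Theorem \ref{zero_density} then yields that $\omega'$ is normal, which completes the proof.
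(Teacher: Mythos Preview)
Your proof is correct and follows the same route as the paper: show that the set of modified positions has zero density and then invoke Theorem~\ref{zero_density}; your per-block ratio $1/(2^i i)$ is in fact cleaner than the paper's explicit summation bounds. One minor wording slip: a prefix ending inside the topmost superblock $w_k^{l_k}$ may contain several \emph{complete} copies of $w_k$, each with its own flipped run, so ``at most one further run'' is not literally true---but those complete copies still carry flipped fraction $1/(2^k k)<\varepsilon$, so only the final \emph{partial} copy of $w_k$ needs the extra-run allowance, and your estimate goes through unchanged.
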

\begin{proof}
    Remember that $i2^i\log i\leq |1^{\lceil i2^i\log i\rceil}|\leq i2^i(\log i+1)$ for each $w_i$. Moreover, in the construction of $\omega$, each block $w_i$ is repeated $i^{2^i}$ times. Thus we have that $A(n)=\sum_{i=1}^n |M_1(i)|\leq \sum_{i=1}^n i^{2i+1}2^i(\log i+1)<n^{2n+3}2^n$, where $|M_1(i)|$ is the total length of all maximal sequences of consecutive ones in $w_i^{l_i}$. Moreover, we have that $B(n)=\sum_{i=1}^n |w_i^{l_i}|\geq \sum_{i=1}^n i^{2i+1}2^{2i}\log i> (2n)^{2n}$. Thus
    \begin{equation*}
        \frac{A(n)}{B(n)}<\frac{n^{2n+3}2^n}{(2n)^{2n}}=\frac{n^3}{2^n}\xrightarrow[n\to\infty]{}0,
    \end{equation*}
    which implies that the subset of digits in the binary expansion of $\omega$ which realise the maximal sequences of consecutive ones in consecutive blocks $w_i^{l_i}$ is of zero density. Thus $\omega'$ is normal by Theorem \ref{zero_density}.
\end{proof}
\begin{lemma}
    The limit $\lim_{n\to\infty}\mathcal{L}_n(\omega')=0$.
\end{lemma}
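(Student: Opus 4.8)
The plan is to combine Proposition~\ref{omega_jedynki} with the defining modification of $\omega'$ to bound the run-length function $L_n(\omega')$ by a quantity growing only linearly in the block index, while the denominator $\log_2 n$ grows doubly exponentially. First I would observe that passing from $\omega$ to $\omega'$ can only shorten runs of ones: by Proposition~\ref{omega_jedynki} the maximal run $1^{\lceil i2^i\log i\rceil}$ in each $w_i$ is flanked by a zero on each end, so replacing it with $0^{\lceil i2^i\log i\rceil}$ merely lengthens the neighbouring runs of zeros and leaves every other run of ones intact. Hence, again by Proposition~\ref{omega_jedynki}, inside each modified block $w_i$ every run of consecutive ones has length at most $2i$. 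Since each $w_i$ begins with a $0$, no run of ones can straddle the boundary between two consecutive copies of $w_i$ inside $w_i^{l_i}$, nor the junction between $w_i^{l_i}$ and $w_{i+1}^{l_{i+1}}$; therefore the longest run of ones occurring anywhere within the initial part of $\omega'$ coming from $w_1^{l_1},\dots,w_i^{l_i}$ is at most $2i$.

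Next I would estimate $\mathcal{L}_n(\omega')$ relative to the cut points $B_n=\sum_{j=1}^n|w_j^{l_j}|$ used in Lemma~\ref{omega_nietypowe}. For any position $k$ in the $i$-th region, that is $B_{i-1}<k\leq B_i$, the previous paragraph gives $L_k(\omega')\leq 2i$, while $k>B_{i-1}$ forces $\log_2 k>\log_2 B_{i-1}$. Reusing the lower bound $B(n)>(2n)^{2n}$ already established in the proof of Lemma~\ref{omega'}, I obtain $\log_2 B_{i-1}>2(i-1)\log_2(2(i-1))$, so that
\begin{equation*}
    0\leq \frac{L_k(\omega')}{\log_2 k}\leq \frac{2i}{2(i-1)\log_2(2(i-1))}\xrightarrow[i\to\infty]{}0.
\end{equation*}
As $k\to\infty$ the associated block index $i\to\infty$, and since $\mathcal{L}_n(\omega')\geq 0$ for every $n$, the squeeze yields $\lim_{n\to\infty}\mathcal{L}_n(\omega')=0$.

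The computations are routine; the single delicate point is the assertion that zeroing out a long run of ones cannot produce a new long run of ones. This is where I would rely on the flanking-zeros observation from Proposition~\ref{omega_jedynki} together with the fact that every $w_i$ starts with $0$, which guarantees that neither the internal repetitions within $w_i^{l_i}$ nor the junctions between successive $w_i^{l_i}$ concatenate two short runs into a longer one. Once this structural fact is in place, the uniform bound $L_k(\omega')\leq 2i$ on the $i$-th region is immediate and the claimed limit follows at once.
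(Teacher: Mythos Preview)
Your argument is correct and follows essentially the same route as the paper: use Proposition~\ref{omega_jedynki} to bound the surviving runs of ones in the $i$-th region by $2i$, lower bound $\log_2 k$ via the estimate $B(n)>(2n)^{2n}$, and squeeze. If anything, you are more careful than the paper in explicitly ruling out runs that straddle block boundaries (via the observation that each $w_i$ begins with $0$), a point the paper leaves implicit.
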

\begin{proof}
    By Proposition \ref{omega_jedynki}, we know that, besides blocks $1^{\lceil i2^i\log i\rceil}$, the length of every sequence of consecutive ones in $w_i^{l_i}$ is bounded by $2i$. To obtain $\omega'$, we performed a modification of $\omega$ where we neutralised every block $1^{\lceil i2^i\log i\rceil}$ in $w_i$. Thus, evaluating the function $\mathcal{L}_n(\omega')$ at positions $B_n=\sum_{i=1}^n |w_i^{l_i}|$, we have
    \begin{equation*}
        \frac{L_{B_n}(\omega')}{\log_2 B_n}<\frac{2n}{\log_2(\sum_{i=1}^n (2i)^{2i})}<\frac{2n}{2n \log_2 2n}\xrightarrow[n\to\infty]{}0.
    \end{equation*}
    Moreover, for each $k\in[B_{n}, B_{n+1}]\cap\mathbb{N}$ we have that $$\frac{L_k(\omega)}{\log_2 k}<\frac{L_{B_{n+1}}(\omega)}{\log_2 B_{n}}\xrightarrow[n\to\infty]{}0,$$ which completes the proof.
\end{proof}

We are ready to construct the map $\phi\colon\mathbb{N}^\mathbb{N}\to\{0,1\}^\mathbb{N}$. Let $\omega'$ be defined as in Lemma \ref{omega'}. Set $$\phi(a_n)=f'(a_{2n-1})[g'(a_{2n})[\omega']],$$
meaning that we apply two modifications to the number $\omega'$: first using the action of $g'(a_{2n})$ and then $f'(a_{2n-1})$. It is clear that $\phi$ is continuous. We will show that it is the desired reduction.

\begin{proposition}
    Let $x$ be a binary sequence. If $x$ is normal, then the image $f'(a_n)[x]$ is normal.
\end{proposition}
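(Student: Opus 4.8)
The plan is to reduce the statement to the Ki--Linton preservation result, Theorem \ref{zero_density}: since $f'(a_n)[x]$ is obtained from $x$ by altering digits, it suffices to prove that the set $S$ of positions at which $f'(a_n)[x]$ may differ from $x$ has zero density. The whole argument thus collapses to a single counting estimate.

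First I would read off $S$ directly from the definition of $f'$. The map only touches the last $|bl_n|$ coordinates of each dyadic block $[w_{2^{n-1}+1},\dots,w_{2^n}]$ with $n\geq 5$, so
\begin{equation*}
    S\subseteq\bigcup_{n\geq 5}\{2^n-|bl_n|+1,\dots,2^n\}.
\end{equation*}
Here I would record the side remark that the construction is well-posed: since $l_n=\min\{[T(n)],2n\}\leq 2n$, we have $|bl_n|=l_n+2\leq 2n+2\leq 2^{n-1}$ for $n\geq 5$, so each $bl_n$ fits inside its own dyadic block and the altered intervals are pairwise disjoint.

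Next I would estimate $|S\cap\{1,\dots,m\}|$. Each altered interval has length at most $2n+2$, and for a given $m$, letting $N$ be the largest integer with $2^N\leq m$, every altered interval meeting $\{1,\dots,m\}$ arises from some $n\leq N+1$. Hence
\begin{equation*}
    |S\cap\{1,\dots,m\}|\leq\sum_{n=5}^{N+1}(2n+2)=O(N^2).
\end{equation*}
Because $m\geq 2^N$, this gives $\tfrac{|S\cap\{1,\dots,m\}|}{m}\leq\tfrac{O(N^2)}{2^N}\xrightarrow[m\to\infty]{}0$, so $d(S)=0$.

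Finally, since $x$ is normal and $f'(a_n)[x]$ differs from $x$ only on the zero density set $S$, Theorem \ref{zero_density} yields that $f'(a_n)[x]$ is normal. I expect the only delicate point to be the bookkeeping that keeps the altered intervals inside their dyadic blocks and disjoint; the density bound itself is the elementary observation that intervals of merely polynomial length placed at the dyadic scale $2^n$ are asymptotically negligible against $m$.
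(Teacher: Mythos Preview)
Your proposal is correct and follows exactly the same route as the paper: bound the set of modified positions by $\sum_{i=5}^{n}(2i+2)$ among the first $2^n$ digits, observe this is $O(n^2)/2^n\to 0$, and invoke Theorem~\ref{zero_density}. The paper's proof is a terse three-line version of yours; your additional bookkeeping (checking $|bl_n|\leq 2^{n-1}$ so the altered intervals sit disjointly inside their dyadic blocks, and carefully bounding the largest $n$ whose interval can meet $\{1,\dots,m\}$) is all sound and simply makes explicit what the paper leaves implicit.
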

\begin{proof}
    Observe that using $f'$ we modify at most $\sum_{i=5}^n 2i+2$ positions of the first $2^n$ digits. Since $\frac{2n^2+2n}{2^n}\to 0$, the modification happens on zero density subset of digits. The assertion follows immediately from Theorem \ref{zero_density}.
\end{proof}

\begin{lemma}
    The sequence $\phi(a_n)$ is normal if and only if $a_{2n}\in P_3$.
\end{lemma}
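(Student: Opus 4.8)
The plan is to prove the biconditional as a chain of two equivalences, separating the roles of the two modifications: the inner map $g'(a_{2n})$ governs normality, while the outer map $f'(a_{2n-1})$ is normality-neutral. Write $y=g'(a_{2n})[\omega']$, so that $\phi(a_n)=f'(a_{2n-1})[y]$. The goal is to show $\phi(a_n)\in\textit{Normal}$ if and only if $y\in\textit{Normal}$, and then that $y\in\textit{Normal}$ if and only if the even-indexed subsequence $(a_{2n})_{n=1}^\infty$ lies in $P_3$.

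First I would show that $\phi(a_n)$ and $y$ differ only on a zero-density subset of digits. By the definition of $f'$, the sole positions altered in passing from $y$ to $\phi(a_n)$ are the final $|bl_n|\le 2n+2$ digits of each initial segment $[w_{2^{n-1}+1},\dots,w_{2^n}]$ for $n\ge 5$. Hence among the first $2^N$ digits at most $\sum_{n=5}^{N}(2n+2)=O(N^2)$ positions are modified, and since $O(N^2)/2^N\to 0$ this subset has zero density; this is exactly the estimate already carried out in the Proposition preceding the statement.

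Next I would upgrade Theorem \ref{zero_density} to a genuine equivalence, which is the only delicate point of the argument. The relation ``differ on a zero-density subset of digits'' is symmetric, so Theorem \ref{zero_density} applies in both directions: applied to $y$ it gives that normality of $y$ forces normality of $\phi(a_n)$, while applied to $\phi(a_n)$ (recovering $y$ by reversing the same zero-density modification) it gives the converse. Consequently $\phi(a_n)$ is normal if and only if $y$ is normal. This is essential because the cited Proposition supplies only the implication for normal inputs, whereas the ``only if'' part of the lemma needs to transport abnormality from $y$ to $\phi(a_n)$ as well; the symmetry furnishes precisely this missing direction.

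Finally I would invoke the reduction property of $g'$. Since $\omega'$ is normal by Lemma \ref{omega'}, the map $g'$ with $w=\omega'$ is a reduction from $P_3$ to \textit{Normal}: it zeroes the digits of $\omega'$ on the set $g\big((a_{2n})_{n}\big)$, which by Theorem \ref{g_reduction} has zero density exactly when $(a_{2n})_{n}\in P_3$. When $(a_{2n})_{n}\in P_3$, Theorem \ref{zero_density} keeps $y$ normal; when $(a_{2n})_{n}\notin P_3$, the zeroed set has positive upper density, biasing the frequency of the digit $0$ and destroying normality, so $y$ is abnormal. Thus $y\in\textit{Normal}$ if and only if $(a_{2n})_{n}\in P_3$. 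Chaining this with the equivalence from the previous paragraph yields that $\phi(a_n)$ is normal if and only if $(a_{2n})_{n}\in P_3$, which is the assertion.
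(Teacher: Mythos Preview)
Your proof is correct and takes essentially the same approach as the paper: neutralise the outer map $f'$ by a zero-density argument, then invoke the reduction property of $g'$ inherited from Ki--Linton together with the normality of $\omega'$. You are in fact more explicit than the paper on one point: the paper's proof of this lemma discusses only $g'(a_{2n})[\omega']$ and relies tacitly on the preceding Proposition for the forward direction, whereas you spell out that the symmetry of ``differ on a zero-density set'' in Theorem~\ref{zero_density} is what transports abnormality from $y$ to $\phi(a_n)$ in the converse direction.
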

\begin{proof}
    Observe that if $a_{2n}\in P_3$, then the map $g'$ associates a zero density subset of $\mathbb{N}$ on which we modify the binary expansion of $\omega'$. Since $g'$ is based on a reduction $g$ from $P_3$ to $D_0$, whenever $b_{2n}\not\in P_3$, then $g(b_{2n})$ is a subset of positive density and thus changing the digits to 0 via $g'$ on a positive density subset of digits implies $g'(b_{2n})[\omega']\not\in\textit{Normal}$.
\end{proof}

\begin{lemma}
    The sequence $\phi(a_n)$ is typical if and only if $a_{2n-1}\in P_3$.
\end{lemma}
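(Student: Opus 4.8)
The plan is to treat the lemma as essentially a consequence of the already-stated reduction property of $f'$, reading off the crucial fact that $\phi$ applies $g'(a_{2n})$ first and $f'(a_{2n-1})$ second. I would therefore fix the intermediate sequence $x := g'(a_{2n})[\omega']$ and regard it as the base sequence fed into $f'$. The whole argument then splits into two parts: checking that $x$ satisfies the hypothesis $\limsup_{n\to\infty}\mathcal{L}_n(x)<1$ needed to run $f'$, and then quoting that under this hypothesis $f'$ is a reduction from $P_3$ to $\textit{TpcN}$, so that typicality of $\phi(a_n)=f'(a_{2n-1})[x]$ is governed precisely by whether $(a_{2n-1})_{n=1}^\infty\in P_3$.

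First I would verify that $x=g'(a_{2n})[\omega']$ satisfies $\limsup_{n\to\infty}\mathcal{L}_n(x)<1$, and moreover that this holds \emph{for every} value of $a_{2n}$, so that the hypothesis is available no matter what the even-indexed subsequence does. The point is that $g'$ modifies $\omega'$ only by turning selected digits into $0$; it never converts a $0$ into a $1$. Hence every run of consecutive ones of $x$ is contained in a run of consecutive ones of $\omega'$, giving $L_n(x)\leq L_n(\omega')$ and therefore $\mathcal{L}_n(x)\leq\mathcal{L}_n(\omega')$ for all $n$. Since the preceding lemma gives $\lim_{n\to\infty}\mathcal{L}_n(\omega')=0$, I conclude $\limsup_{n\to\infty}\mathcal{L}_n(x)=0<1$, as required.

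With the hypothesis secured, I would invoke the stated fact that whenever $\limsup_{n\to\infty}\mathcal{L}_n(w)<1$ the map $f'$ is a reduction from $P_3$ to $\textit{TpcN}$ over the base sequence $w$. Applying this with $w=x$ yields that $\phi(a_n)=f'(a_{2n-1})[x]$ is typical if and only if $(a_{2n-1})_{n=1}^\infty\in P_3$, which is exactly the assertion.

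The main obstacle, and the only place genuinely needing care, is confirming that the subsequent $f'$ modification really lets its inserted blocks $bl_n=(0,1^{l_n},0)$ govern $L_{2^n}$, so that the reasoning of Lemma~\ref{tpcn} transfers intact. Because each $bl_n$ is flanked by zeros and the base runs of $x$ have length $o(\log_2 n)$ (as $\mathcal{L}_n(x)\to 0$), the inserted run of length $l_n=\min\{[T(n)],2n\}$ dominates $L_{2^n}$ for large $n$; the standard interpolation between consecutive dyadic positions then controls the intermediate indices. When $a_{2n-1}\in P_3$ the factor $2^{1/a_{2n-1}}\to 1$ forces $\mathcal{L}_{2^n}\to 1$, whereas if $a_{2n-1}\notin P_3$ a bounded subsequence $a_{2n_k-1}<N$ keeps $2^{1/a_{2n_k-1}}\geq 2^{1/N}>1$, pushing $\mathcal{L}_{2^{n_k}}$ above $1$ and breaking convergence. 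I expect this domination-and-interpolation bookkeeping to be the sole delicate step, everything else being a direct appeal to the established properties of $g'$, $\omega'$, and $f'$.
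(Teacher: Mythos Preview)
Your proposal is correct and follows essentially the same approach as the paper: verify that the intermediate sequence $x=g'(a_{2n})[\omega']$ has runs of ones growing too slowly, then invoke the reduction property of $f'$ (via the computations of Lemma~\ref{tpcn}) to conclude that typicality of $\phi(a_n)$ is controlled solely by $(a_{2n-1})$. Your explicit monotonicity observation---that $g'$ only zeroes digits, so $L_n(x)\leq L_n(\omega')$ and hence $\limsup\mathcal{L}_n(x)=0$ uniformly in $(a_{2n})$---is in fact a cleaner justification of this step than the paper's brief remark that the runs ``grow too slow''.
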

\begin{proof}
    Observe that the sequences of consecutive ones in the binary expansion of $g'(a_{2n})[\omega']$ grow too slow for $\mathcal{L}_k$ to converge to 1. We update the growth rate of the length of sequences of consecutive ones using the function $f'$. Now, following the computations from Lemma \ref{tpcn}, if $a_{2n-1}\in P_3$, then the we approximate $\log_2 B_n$ properly and $\mathcal{L}_k(\phi(a_n))\to 1$. If $a_{2n-1}\not\in P_3$, then the adjustment $2^{\frac{1}{a_{2n-1}}}$ causes $T(n)$ to repetitively create too long sequences of ones resulting with $\limsup\mathcal{L}_k(\phi(a_n))>1$.
\end{proof}

By the above assertions one can see that the map $\phi$ is a reduction from $C\setminus D$ to $\textit{Normal}\setminus\textit{TpcN}$ and from $D\setminus C$ to $\textit{TpcN}\setminus\textit{Normal}$.

\begin{corollary}
    The sets $\textit{TpcN}\setminus\textit{Normal}$, $\textit{Normal}\setminus\textit{TpcN}$ are $D_2(\Pi^0_3)$-complete.
\end{corollary}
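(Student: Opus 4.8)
The plan is to assemble the pieces already in place into the two standard halves of a completeness argument: an upper bound placing each set in $D_2(\Pi_3^0)$, and a lower bound establishing $D_2(\Pi_3^0)$-hardness through the reduction $\phi$.

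For the upper bound I would recall that $\textit{TpcN}\in\Pi_3^0$ by Theorem \ref{theoremA} and $\textit{Normal}\in\Pi_3^0$ by the Ki--Linton result cited in Section \ref{classification}. By the very definition of the class $D_2(\Pi_3^0)$ recalled there --- sets of the form $A\setminus B$ with $A,B\in\Pi_3^0$ --- both $\textit{TpcN}\setminus\textit{Normal}$ and $\textit{Normal}\setminus\textit{TpcN}$ lie in $D_2(\Pi_3^0)$. This step is immediate and requires no further computation.

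For the lower bound I would invoke the two characterization lemmas established just above: $\phi(a_n)$ is normal if and only if $a_{2n}\in P_3$, and $\phi(a_n)$ is typical if and only if $a_{2n-1}\in P_3$. Intersecting these two equivalences yields
\begin{equation*}
    \phi^{-1}(\textit{Normal}\setminus\textit{TpcN})=C\setminus D, \qquad \phi^{-1}(\textit{TpcN}\setminus\textit{Normal})=D\setminus C,
\end{equation*}
where $C,D$ are the $D_2(\Pi_3^0)$-complete sets of Section \ref{level4}. Since $\phi$ is continuous, Theorem \ref{Wadge} then transfers the $D_2(\Pi_3^0)$-hardness of $C\setminus D$ and of $D\setminus C$ (guaranteed by \cite{Bill}) to the two target sets. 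Combining hardness with membership in the class gives completeness, following the comment on class completeness in Section \ref{borel_hierarchy_introduced}.

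The crux --- handled by the preceding lemmas but worth flagging as the genuine difficulty --- is the \emph{independence} of the two modifications composing $\phi$. Writing $\phi(a_n)=f'(a_{2n-1})[g'(a_{2n})[\omega']]$, one applies the normality-controlling action $g'(a_{2n})$ first and the typicality-controlling action $f'(a_{2n-1})$ second, and the whole argument rests on these two controls not interfering. Concretely, the observation that $f'$ alters only a zero-density set of digits shows (via Theorem \ref{zero_density}) that $f'$ preserves whatever normality status $g'$ has produced, while the run-length analysis of Lemma \ref{tpcn} shows that the $g'$-action leaves the consecutive-ones blocks too short to affect typicality, so that $f'$ alone governs the convergence of $\mathcal{L}_k$ to $1$. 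It is precisely this decoupling --- that membership in $\textit{Normal}$ and membership in $\textit{TpcN}$ can be prescribed independently through the even and odd coordinates of $a$ --- that realizes the method of two mutually independent actions from \cite{Bill} and makes the single map $\phi$ a simultaneous reduction for both difference sets.
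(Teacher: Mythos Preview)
Your proposal is correct and follows the paper's own approach essentially verbatim: the paper likewise notes the upper bound from $\textit{TpcN},\textit{Normal}\in\Pi_3^0$ at the start of Section \ref{level4}, derives the two preimage identities $\phi^{-1}(\textit{Normal}\setminus\textit{TpcN})=C\setminus D$ and $\phi^{-1}(\textit{TpcN}\setminus\textit{Normal})=D\setminus C$ from the two characterization lemmas, and concludes completeness immediately. Your closing paragraph on the independence of the $f'$- and $g'$-actions is a faithful summary of what the preceding proposition and lemmas establish, and makes explicit the decoupling that the paper leaves implicit in the structure of those results.
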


\section*{Acknowledgements}
Author would like to thank the anonymous referee for their insightful remarks on the paper, as well as Dominik Kwietniak, Tomasz Kania, Florian Richter, Piotr Oprocha, Bill Mance and Dawid Bucki for helpful comments during author's work.

\end{document}